\newif\ifspringer
\springerfalse

\newif\ifelsevier
\elsevierfalse

\ifspringer
\documentclass[envcountsect,envcountsame]{svjour3}
\smartqed
\else
\ifelsevier
\documentclass[a4paper,fleqn]{cas-dc}
\else
\documentclass[a4paper]{article}
\fi
\fi

\usepackage{amsmath}
\usepackage{amsfonts}
\usepackage{amssymb}         
\usepackage{amsopn}
\usepackage{theorem}          
\usepackage{latexsym}
\usepackage{graphicx}        
\usepackage{mathrsfs}		
\usepackage{psfrag}
\usepackage{algorithm}
\usepackage{algorithmic}
\usepackage{color}
\usepackage{verbatim}
\usepackage{url}
\usepackage{enumerate}
\ifelsevier
\else
\usepackage{titlesec}
\fi
\usepackage{tikz}
\usepackage{hyperref}
\usetikzlibrary{positioning,chains,fit,shapes,calc,arrows}

\ifspringer
\else
\ifelsevier
\else
\fi
\fi

\ifspringer
\else
\ifelsevier
\else
\newtheorem{result}{\ }[section]
\theoremstyle{changebreak}                
\newtheorem{theorem}[result]{Theorem}

\newtheorem{lemma}[result]{Lemma}
\newtheorem{corollary}[result]{Corollary}
\newtheorem{proposition}[result]{Proposition}

\newenvironment{proof}
 {{\sl Proof.}\hspace*{1 ex}}%
 {{\nopagebreak\hspace*{\fill}$\Box$\par\vspace{12pt}}}
\newcommand{\qed}{\hfill \ensuremath{\Box}}
\fi
\fi

\newcommand{\transpose}[1]{{#1}^{\top}}
\newcommand{\cone}[1]{\mathsf{cone}(#1)}
\newcommand{\cvx}[1]{\mathsf{cvx}(#1)}
\newcommand{\COR}[1]{\mathsf{COR}(#1)}
\newcommand{\nCOR}[1]{\mathsf{\overline{COR}}(#1)}
\newcommand{\CUT}[1]{\mathsf{CUT}(#1)}
\newcommand{\nCUT}[1]{\mathsf{\overline{CUT}}(#1)}
\newcommand{\CONX}[1]{\mathcal{C}(#1)}

\newcommand{\diag}[1]{\mathsf{diag}(#1)}

\newcommand{\leo}[1]{{\color{black}{#1}}}

\def\K{{\cal K}}
\def\C{{\cal C}}

\begin{document}

\ifspringer

\title{Hardness of some optimization problems over correlation polyhedra}
\author{Alberto Caprara\and Fabio Furini\and Claudio Gentile\and Leo Liberti\and Andrea Lodi}
\institute{A.~Caprara\at Universit\`a di Bologna, Italy ($\dag$ 2012) \and F.~Furini\at Department of Computer, Control and Management Engineering ``A.~Ruberti'' --- Sapienza University of Rome. Via Ariosto 25, 00185 Roma, Italy \\ \email{\url{fabio.furini@uniroma1.it}} \and C.~Gentile\at Istituto di Analisi dei Sistemi ed Informatica ``A. Ruberti'', Consiglio Nazionale delle Ricerche, Via dei Taurini 19, 00185 Roma, Italy \\ \email{\url{gentile@iasi.cnr.it}}\and L.~Liberti (\textit{corresponding author})\at LIX CNRS Ecole Polytechnique, Institut Polytechnique de Paris, 91128 Palaiseau, France \\ \email{\url{leo.liberti@polytechnique.edu}}\and A.~Lodi\at CornellTech, New York, USA; and IASI, Rome, Italy \\ \email{\url{al748@cornell.edu}}}
\maketitle
  
\else

\ifelsevier

\let\WriteBookmarks\relax
\def\floatpagepagefraction{1}
\def\textpagefraction{.001}
\shorttitle{Hardness over correlation polyhedra}    
\shortauthors{Caprara et al.}  
\title [mode = title]{Hardness of some optimization problems over correlation polyhedra}  

\tnotemark[1] 

\tnotetext[1]{} 

\author[1]{Alberto Caprara}
\fnmark[1]
\fntext[1]{$\dag$ 2012}
\affiliation[1]{organization={Universit\`a di Bologna},country={Italy}}

\author[2]{Fabio Furini}
\ead{furini@diag.uniroma1.it}
\affiliation[2]{organization={Department of Computer, Control and Management Engineering ``A.~Ruberti'' --- Sapienza University of Rome},addressline={Via Ariosto 25},postcode={00185},city={Roma},country={Italy}}

\author[3]{Claudio Gentile}
\ead{gentile@iasi.cnr.it}
\affiliation[3]{organization={Istituto di Analisi dei Sistemi ed Informatica ``A.~Ruberti'', Consiglio Nazionale delle Ricerche},addressline={Via dei Taurini 19},postcode={00185},city={Roma},country={Italy}}

\author[4]{Leo Liberti}[orcid=0000-0003-3139-6821]
\cormark[4]
\fnmark[4]
\fntext[4]{LL gratefully acknowledges grants from CNR and INDAM.}
\ead{leo.liberti@polytechnique.edu}
\ead[url]{www.lix.polytechnique.fr/~liberti}
\affiliation[4]{organization={LIX CNRS, Ecole Polytechnique, Institut Polytechnique de Paris},postcode={91128},city={Palaiseau},country={France}}

\author[5]{Andrea Lodi}
\ead{al748@cornell.edu}
\affiliation[5]{organization={CornellTech},city={New York},country={USA}}
\else

\thispagestyle{empty}
\begin{center} 

{\LARGE Hardness of some optimization problems over correlation polyhedra}
\par \bigskip
{\sc Alberto Caprara${}^1$, Fabio Furini${}^2$, Claudio Gentile${}^3$, Leo Liberti${}^{4}$, Andrea Lodi${}^{5,3}$} 
\par \bigskip
\begin{minipage}{15cm}
\begin{flushleft}
{\small
\begin{itemize}
\item[${}^1$] {\it Universit\`a di Bologna, Italy} ($\dag$ 2012)
\item[${}^2$] {\it Department of Computer, Control and Management Engineering ``A.~Ruberti'' --- Sapienza University of Rome. Via Ariosto 25, 00185 Roma, Italy} \\ Email:\url{fabio.furini@uniroma1.it}
\item[${}^3$] {\it Istituto di Analisi dei Sistemi ed Informatica ``A.~Ruberti'', Consiglio Nazionale delle Ricerche, Via dei Taurini 19, 00185 Roma, Italy} \\ Email:\url{gentile@iasi.cnr.it}
\item[${}^4$] {\it LIX CNRS, \'Ecole Polytechnique, Institut Polytechnique de Paris, F-91128 Palaiseau, France} \\ Email:\url{liberti@lix.polytechnique.fr}
\item[${}^5$] {\it CornellTech, New York, USA} \\ Email:\url{al748@cornell.edu}
\end{itemize}
}
\end{flushleft}
\end{minipage}
\par \medskip \today
\end{center}
\par \bigskip
\fi
\fi

\begin{abstract}
  We prove the \textbf{NP}-hardness, using Karp reductions, of some problems related to the correlation polytope and its corresponding cone, spanned by all of the $n\times n$ rank-one matrices over $\{0,1\}$. The problems are: membership, rank of the decomposition, and a ``relaxed rank'' obtained from relaxing the zero-norm expression for the rank to an $\ell_1$ norm. While membership and rank are natural problems for any matrix cone, the relaxed rank problem occurs in some signal processing and statistical applications. 
  
\ifspringer
  \keywords{matrix decomposition, cone, cut polytope, boolean quadric polytope.} 
  \else
  \ifelsevier
  \else
 \textbf{Keywords}: matrix decomposition, cone, cut polytope, boolean quadric polytope.
 \fi
 \fi
\end{abstract}

\ifelsevier
\begin{keywords}
  matrix decomposition\sep cone\sep cut polytope\sep boolean quadric polytope
\end{keywords}
\maketitle
\fi

\section{Introduction}
\label{s:intro}
For any finite set $\mathcal{M}$ of matrices one can define the cone $\cone{\mathcal{M}}$ and the polytope $\cvx{\mathcal{M}}$. The fundamental question in this setting is that of membership: given a matrix $M'$, does it belong to the cone or the convex hull of $\mathcal{M}$? We note that $M'\in\cone{\mathcal{M}}$ is equivalent to
\begin{equation}
  \exists p\in\mathbb{R}^{|\mathcal{M}|}_+\quad M' = \sum_{M\in\mathcal{M}} p_M M,\label{eq:gencone}
\end{equation}
while $M'\in\cvx{\mathcal{M}}$ constrains $p$ to be in the unit simplex, i.e.~it adds the constraint $\sum_{\mu\in\mathcal{M}} p_\mu=1$ to Eq.~\eqref{eq:gencone}.
A second question is that of rank: what is the minimum number of elements of $\mathcal{M}$ using which one can represent $M$? Or, equivalently, what is the minimum possible value of $\|p\|_0$ for $M$? A third relevant question is that of minimizing the relaxed rank $\|p\|_1$, given that the ``mathematics of sparsity'' \cite{candes} promises to turn a problem involving binary variables into one involving continuous variables only. Obviously, the relaxed rank question only makes sense for $\cone{\mathcal{M}}$ since the convex hull fixes $\|p\|_1=1$ (recall that $p\ge 0$). 

We consider the \textbf{NP}-hardness of these three questions for a special case where $\mathcal{M}$ is the set of rank-one matrices with entries in $\{0,1\}$. The rank and relaxed rank minimization problems then become decision questions: is the rank, or relaxed rank, less than a given threshold or not? For rank and relaxed rank decision problems we prove \textbf{NP}-completeness ``with promise'', i.e.~we assume that $M$ satisfies Eq.~\eqref{eq:gencone} or its convex hull counterpart, otherwise hardness follows trivially from membership. The fulfilment of the promise is given by a single bit of information: i.e.~we know that $M$ can be decomposed as per Eq.~\eqref{eq:gencone}, but we do not have the weights $p_\mu$ in the decomposition. Therefore, the ``rank of $p$'' and ``relaxed rank of $p$'' $\le$ threshold questions are non-trivial. We provide Karp reductions for every one of our hardness proofs: such reductions are stronger than Cook-Turing reductions (typically afforded by the optimization/separation/membership equivalence \cite{gls}) because they are based on a single query to the target problem. 

We now consider $\mathcal{M}$ as a set of rank-1 symmetric matrices with entries in sets of two integers, either $\{0,1\}$ or $\{-1,1\}$. We focus mostly on the former, but obtain results on both. Given a symmetric matrix $\Gamma$ of dimension $n\times n$, with entries $\Gamma_{ij}=\Gamma_{ji}\in\mathbb{R}$, we consider its decomposition in the following form:
\begin{equation}
  \Gamma = \sum\limits_{k\ge 0} p_k x_k\transpose{x}_k,
  \label{eq:decomp}
\end{equation}
where $x_k$ is the vector in $\{0,1\}^n$ providing the base-2 representation of the integer $k$, and the multiplier vector $p$ is non-negative. From this, it follows that $0\le k\le P_n=2^n-1$, so $p\in\mathbb{R}^{2^n}_+$, and that all $n\times n$ matrices $X^k=x_k\transpose{x}_k$ are rank-1 symmetric and defined on $\{0,1\}$. Therefore, no matrix $\Gamma$ can be decomposed in the form Eq.~\eqref{eq:decomp} unless it is symmetric and all its entries are non-negative. For a given $\Gamma$, we are also interested in the smallest number of terms of the decomposition having $p_k>0$, i.e.~its rank. If we let $\CONX{n}$ be the cone spanned by all of the rank-one boolean symmetric matrices $X^k$, then the feasibility of Eq.~\eqref{eq:decomp} is the membership problem for the conic hull $\CONX{n} \triangleq \cone{X}$, where $X=\{X^k\;|\;0\le k\le P_n\}$.

Whenever Eq.~\eqref{eq:decomp} is subject to the constraint $\sum_{k\ge 0} p_k=1$, we obtain the definition of the \textit{correlation polytope} of order $n$, denoted by $\COR{n}$. This also establishes $\CONX{n}$ as the \textit{correlation cone} of order $n$. 

The rank problem for a given $\Gamma\in\CONX{n}$ can be formulated as the minimization of the zero norm of $p$ subject to Eq.~\ref{eq:decomp}, i.e.
\begin{equation}
  \min\{\|p\|_0 \;|\; \Gamma = \sum\limits_{0\le k\le P_n} p_k X^k \land p\ge 0\},
  \label{eq:rank}
\end{equation}
which an obvious reformulation turns into an exponentially-sized Mixed-Integer Linear Program (MILP). We observe that the linear system Eq.~\eqref{eq:decomp} has $N=n(n+1)/2$ equations, which implies that at most $N$ of the exponentially many variables $p_k$ need be nonzero: this provides an upper bound to the rank. Following the well-known relaxation of the zero-norm to $\ell_1$ norm \cite{candestao2005}, we obtain an exponentially-sized Linear Programming (LP) formulation that computes the \textit{relaxed rank} of a matrix $\Gamma\in\CONX{n}$, i.e.
\begin{equation}
  \min\{\|p\|_1\;|\; \Gamma = \sum\limits_{0\le k\le P_n} p_k X^k \land p\ge 0\}.
  \label{eq:relrank}
\end{equation}
Note that, since $p\ge 0$, we have $\|p\|_1=\sum\limits_{0\le k\le P_n} p_k=\langle\mathbf{1},p\rangle$.

Membership and rank problems are fundamental problems for any matrix decomposition. The relaxed rank becomes meaningful whenever the results from the ``mathematics of sparsity'' \cite{candes,moitra} apply. These results aim at finding sparse vectors as solutions of various problems. These results are relevant when the relaxed rank problem is used as a proxy to the rank problem: namely, this can be done when the number of constraints of a (large) LP is at least of the order of magnitude of the logarithm of the number of variables.  Such an approximation is justified in our case because $N=O(n^2)$ and $\log(P_n)=O(n)$. This makes the relaxed rank problem likely to be easier to solve, in practice, than the rank problem, although both are equally hard in theory. We note that the relaxed rank problem appears in \cite{furini}.

In summary, we consider the following decision problems:
\begin{enumerate}
\item conic hull membership: is $\Gamma\in\CONX{n}$? \label{prob1}
\item conic hull rank: if $\Gamma\in\CONX{n}$, is $\|p\|_0\le\rho$?
\item conic hull relaxed rank: if $\Gamma\in\CONX{n}$, is $\|p\|_1\le\rho$? \label{prob3}
\item convex hull membership: $\Gamma\in\COR{n}$? \label{prob4}
\item convex hull rank: if $\Gamma\in\COR{n}$, is $\|p\|_0\le\rho$? \label{prob5}
\end{enumerate}
We recall that the relaxed rank of the convex hull is not considered because $\sum_k p_k=\|p\|_1=1$ by definition, so every instance in $\COR{n}$ has answer YES as long as $\rho\ge 1$, and every other instance has answer NO: the problem therefore reduces to checking whether $\rho\ge 1$. We state outright that Problem \ref{prob4} in the above list is already known to be \textbf{NP}-hard by \cite{pitowsky}. At first look, Problem \ref{prob5} above would appear to be related to \cite[Problem 5]{bereg}; but, on closer inspection, the vertex set is part of the input in \cite{bereg}, which rules out our problems since Eq.~\eqref{eq:decomp} lists exponentially many terms in $n$. 

In this paper we prove four main theorems, to the effect that membership, rank, and relaxed rank problems with respect to $\CONX{n}$, as well as rank with respect to $\COR{n}$, are all \textbf{NP}-hard. Together with the result in \cite{pitowsky}, this paper settles all the problems in the above list. As corollaries, we also prove \textbf{NP}-hardness of: membership and rank problems for the cut polytope \cite{dezaLaurent}; membership and rank problems for cut and correlation polytopes without the zero element; and membership, rank, and relaxed rank problem for the cut cone. We note that the boolean quadric polytope (BQP) \cite{bqp} is essentially the same as $\COR{n}$, so it inherits the same hardness results (see Sect.~\ref{s:bqp}).

The rest of this paper is organized as follows. A literature review is given in Sect.~\ref{s:litrev}, with the purpose of justifying our interest in the above problems, and clarifying the reasons why we need new hardness proofs for the problems in the list above. In Sect.~\ref{s:prelim}, we give a few easy necessary conditions for matrices $\Gamma$ to be in $\CONX{n}$. The four main results and their corollaries are given in Sect.~\ref{s:hardness}. Some polynomial cases and other related results are discussed in Sect.~\ref{s:related}. Sect.~\ref{s:concl} concludes the paper.

\section{Literature review}
\label{s:litrev}
This section gives an overview of the literature about complexity in the correlation polytopes and some other related polytopes and polyhedra. Our study of the relevant literature shows that the \textbf{NP}-hardness of Problems \ref{prob1}-\ref{prob5} of the list in Sect.~\ref{s:intro} is only clearly established, with Karp reductions, for Problem \ref{prob4}.

In the following, we shall cite \cite{gls} for two purposes: deriving \textbf{NP}-hardness proofs concerning two affinely isomorphic polytopes, and to support the polynomial-time equivalence, under Cook-Turing reductions, of linear optimization separation and membership for rational polyhedra. 

\subsection{The correlation polytope}
The correlation polytope \cite{pitowsky} is $\COR{n}\triangleq\cvx{X}$, i.e.~the convex hull of $X$. Its formulation is
\begin{equation}
  \left.\begin{array}{rcl}
    \sum\limits_{0\le k\le P_n} p_k x_k\transpose{x}_k &=& \Gamma \\
    \sum\limits_{0\le k\le P_n} p_k &=& 1.
    \end{array}\right\}
  \label{eq:cor}
\end{equation}
In the space of the $p$ variables, $\COR{n}$ is the intersection of the cone $\CONX{n}$ (right-hand-side of Eq.~\eqref{eq:decomp}) with the affine subspace $\langle\mathbf{1},p\rangle=1$. In Eq.~\eqref{eq:decomp}, we note that the zero matrix bears zero contribution to the sum, independently of the value of $p_0$: instead of summing over $k\ge 0$, we could therefore sum over $k\ge 1$. This corresponds to removing the zero matrix from $\COR{n}$: we define the \textit{normalized correlation polytope} as $\nCOR{n}\triangleq\cvx{X\smallsetminus 0}$.

\subsection{The boolean quadric polytope}
\label{s:bqp}
The BQP of order $n$ is the convex hull of pairs $(x,y)$ for all $x\in\{0,1\}^n$ and $y\in\{0,1\}^{n(n-1)/2}$ that satisfy
\begin{eqnarray}
  \forall 1\le i\le j\le n\quad y_{ij} &\ge& 0 \\
  \forall 1\le i\le j\le n\quad y_{ij} &\le& x_i \\
  \forall 1\le i\le j\le n\quad y_{ij} &\le& x_j \\
  \forall 1\le i\le j\le n\quad y_{ij} &\ge& x_i + x_j - 1.
\end{eqnarray}
For any $x\in\{0,1\}$, it is well-known \cite{fortet} that $y_{ij}=x_ix_j$ for all $i<j\le n$. Note that, should we extend $y$ to also include $y_{ii}$, we would obtain $y_{ii}=x_ix_i=x_i^2=x_i$ since $x_i\in\{0,1\}$. Thus, the BQP can also be written as the convex hull of all matrices $Y$ of the form
\[
Y = \left(\begin{array}{cccc}
  x_1 & y_{12} & \cdots & y_{1n} \\
  y_{21} & x_2 & \cdots & y_{2n} \\
  \vdots & \vdots & \ddots & \vdots \\
  y_{n1} & y_{n2} & \cdots & x_n
  \end{array}\right) =
  \left(\begin{array}{cccc}
  y_{11} & y_{12} & \cdots & y_{1n} \\
  y_{21} & y_{22} & \cdots & y_{2n} \\
  \vdots & \vdots & \ddots & \vdots \\
  y_{n1} & y_{n2} & \cdots & y_{nn}
  \end{array}\right),
\]
the first being simply the trivial symmetrization of the original $(x,y)$ vector, and the second being due to $y_{ii}=x_i$ for all $i\le n$. This form shows that $Y=x\transpose{x}$. Then, the BQP can be written as the convex hull of all $Y$ matrices derived as $x\transpose{x}$ where $x$ ranges over $\{0,1\}^n$, which is exactly the definition of $\COR{n}$. We note that the above mapping is a polynomial-time computable rational linear isomorphism. By \cite{gls}, this gives a valid polynomial-time reduction between corresponding decision problems. Therefore, the \textbf{NP}-hardness on $\COR{n}$ problems (with or without zero element) transfers to the boolean quadratic polytope (with or without zero element).

\subsection{The cut polytope and its cone}
\label{s:cutpolycone}
The correlation polytope is also related to the cut polytope $\CUT{n}$ \cite{dezaLaurent}, defined similarly to Eq.~\eqref{eq:cor}, but with all matrices $Y^k=y_k\transpose{y}_k$ where $y_k\in\{-1,1\}^n$ for all  $0\le k\le P_n$. Note that each vector $y_k\in\{-1,1\}^n$ can be written as $2x_k-\mathbf{1}_n$: through this correspondence we can define $y_k$ as the vector whose corresponding $x_k$ encodes the integer $k$ in binary. There is, however, a twist: while swapping zeros and ones in $x_k\in\{0,1\}$ one obtains another vector $x_h\in\{0,1\}$ such that $X^k\not=X^h$, if one swaps $\pm 1$ in $y_k$ one obtains another vector $-y_k$ that yields the same matrix $Y^k$: for example, the all-zero and all-one matrices in $\COR{n}$ both map to the all-one matrix in $\CUT{n}$ under the transformation $2x_k-\mathbf{1}$. Hence, while $\CONX{n}$ and $\COR{n}$ have $2^n$ generators, $\CUT{n}$ has $2^{n-1}$ generators: half with respect to $\COR{n}$. Cardinality-wise, $\COR{n}$ therefore corresponds to $\CUT{n+1}$ \cite{desimone90}.

Analogously to the correlation cone, the cut cone is defined as $\cone{Y}$ where $Y=\{Y^k=y_k\transpose{y}_k\;|\;y_k\in\{-1,1\}\land 0\le k\le P_n\}$ is the set of all symmetric $n\times n$ rank-1 matrices with entries over $\{-1,1\}$. It is shown in \cite{karzanov} that the separation problem for the cut cone is \textbf{NP}-hard. 

\subsection{Linear optimization}
\label{s:linopt}
It is known that linear optimization over $\COR{n}$ and $\CUT{n}$ is \textbf{NP}-hard \cite{bqp,dezaLaurent}.

We first consider Quadratic Boolean Optimization (QuBO)
\[\max\{\transpose{x}Qx+\transpose{c}x\;|\;x\in\{0,1\}^n\},\]
where we linearize each quadratic product, introducing a variable matrix $X=x\transpose{x}$. This yields a linear optimization problem $\max \langle Q,X\rangle$ over the set of vertices $X=x\transpose{x}$, where $x\in\{0,1\}^n$ of the $\COR{n}$ polytope. Since linear optimization on a set of points has the same optima as linear optimization on the convex hull of the points, linear optimization on $\COR{n}$ is as hard as QuBO, which is known to be \textbf{NP}-hard. This reduction is detailed but implicit in \cite[Eq.~(1)-(8)]{bqp}, and succinct but explicit in \cite[Eq.~(5.1.4), p.~54]{dezaLaurent}.

Next, we consider the \textsc{max cut} formulation \cite{dezaLaurent}
\[\max\big\{\sum_{i<j} w_{ij}\frac{1-y_iy_j}{2} \;|\;y\in\{-1,1\}^n\big\},\]
where $w$ are a set of given weights on unordered pairs $\{i,j\}$. We re-write this formulation as a linear matrix optimization problem $\max\{\langle W,Y\rangle\;|\;Y=y\transpose{y}\land y\in\{-1,1\}^n\}$, where $W=(w_{ij})$ is an $n\times n$ symmetric matrix. Since the feasible set $\{y\transpose{y}\;|\;y\in\{-1,1\}^n\}$ is the vertex set of $\CUT{n}$, we can take the convex hull of this vertex set and obtain the exponentially-sized LP $\max\{\langle W,Y\rangle\;|\;Y\in\CUT{n}\}$. Again, since linear optimization on a set of points has the same optima as linear optimization on the convex hull of the points, linear optimization on $\CUT{n}$ is as hard as \textsc{max cut}, which is known to be \textbf{NP}-hard. This reduction is mentioned in \cite[Eq.~(4.1.4), p.~38]{dezaLaurent}. 

Even though these two linear optimization problems are \textbf{NP}-hard, we cannot conclude that all their possible subsets of instances form \textbf{NP}-hard problems. We therefore need to explicitly prove \textbf{NP}-hardness for rank problems with respect to $\COR{n}$ and $\CUT{n}$, as well as relaxed rank problems for the corresponding cones (see Sect.~\ref{s:hardness}).

\subsection{Membership}
\label{s:membership}
We already mentioned that testing membership in $\COR{n}$ is \textbf{NP}-hard by \cite{pitowsky} (Problem \ref{prob4} from the list in Sect.~\ref{s:intro}). We note that \cite{pitowsky} provides a Karp reduction. We mention in passing that a Cook-Turing reduction is readily established by Sect.~\ref{s:linopt} and the polynomial-time equivalence of linear optimization, separation, and membership over $\COR{n}$ \cite{gls}. 

In \cite{dezaLaurent}, the \textbf{NP}-hardness of the corresponding membership problem in $\CUT{n}$ is derived as follows: \textsc{max cut} proves \textbf{NP}-hardness of linear optimization over $\CUT{n}$ (Sect.~\ref{s:linopt}). By \cite{gls}, linear optimization separation and membership in rational polytopes are polynomial-time equivalent problems under Cook-Turing reductions: whence membership in $\CUT{n}$ is \textbf{NP}-hard. Similarly, the fact that separation in the cut cone is \textbf{NP}-hard \cite{karzanov} implies that membership in that cone is also \textbf{NP}-hard. We note that these are Cook-Turing reductions: we will provide Karp reductions in Sect.~\ref{s:hardness}.

\subsection{Rank}
It is known that solving sparse linear systems of equations (\textsc{sparse linear system}) is \textbf{NP}-hard \cite{natarajan}. More precisely, there is a reduction from \textsc{exact cover by 3-sets} (X3C) to the (decision version of the) problem of computing a vector $p$ satisfying $\|Ap-b\|_2\le\epsilon$ for given $A,b,\epsilon>0$ such that $\|p\|_0$ is minimum. Given the real numbers setting due to the presence of the $\ell_2$ norm, the reduction assumes the Real RAM computational model \cite[\S 1.4]{preparata}. The requirement of a real computational model and the absence of the constraint $p\ge 0$ makes it doubtful that \textsc{sparse linear system} is a valid candidate for a source problem in view to reduce to rank minimization for $\COR{n}$ or $\CONX{n}$. Indeed, we use reductions from other problems in Sect.~\ref{s:hardness}. 

\section{Some necessary conditions for membership in $\CONX{n}$}
\label{s:prelim}
We start with some easy preliminary results in order to establish some necessary conditions for $\Gamma$ to belong to the cone $\CONX{n}$. A \textit{positive semidefinite} (PSD) matrix is a square symmetric matrix where all the eigenvalues are non-negative.
\begin{lemma}
  If $\Gamma\in\CONX{n}$, then it is PSD. \label{lem:necexact2}
\end{lemma}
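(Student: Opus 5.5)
The plan is to unpack membership directly. By definition of $\CONX{n}$ as $\cone{X}$, any $\Gamma\in\CONX{n}$ can be written as in Eq.~\eqref{eq:decomp}, $\Gamma=\sum_{0\le k\le P_n} p_k x_k\transpose{x}_k$ with $p\ge 0$. Symmetry of $\Gamma$ follows at once, because each $x_k\transpose{x}_k$ is symmetric and a nonnegative combination of symmetric matrices is symmetric. So the only thing left to verify is that all eigenvalues are nonnegative.

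For this I will use the standard equivalent characterization: a real symmetric matrix is PSD if and only if $\transpose{v}\Gamma v\ge 0$ for every $v\in\mathbb{R}^n$. One direction is all we need, and it is immediate from the spectral theorem. If $\Gamma u=\lambda u$ with $u\ne 0$, then $\lambda\|u\|_2^2=\transpose{u}\Gamma u\ge 0$, so $\lambda\ge 0$.

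I then compute the quadratic form for an arbitrary $v\in\mathbb{R}^n$:
\[
\transpose{v}\Gamma v=\sum_{0\le k\le P_n} p_k\,\transpose{v}x_k\transpose{x}_k v=\sum_{0\le k\le P_n} p_k\,(\transpose{x}_k v)^2\ge 0,
\]
since every $p_k\ge 0$ and every term $(\transpose{x}_k v)^2$ is a square. Equivalently, each generator $X^k$ is PSD, and the PSD matrices form a convex cone, so it contains $\cone{X}$.

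I do not expect any real obstacle here. The only step that needs care is to state explicitly that the eigenvalue definition of PSD used in the paper coincides with nonnegativity of the quadratic form on symmetric matrices, and that is exactly the one-line spectral argument above.
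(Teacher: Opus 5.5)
Your proposal is correct and follows essentially the same route as the paper: you expand the quadratic form as $\sum_k p_k(\transpose{x}_k v)^2\ge 0$, which is the paper's observation that each generator $X^k$ is PSD and that nonnegative combinations of PSD matrices are PSD. Your explicit checks of symmetry and of the link between the quadratic-form and eigenvalue definitions are details the paper leaves implicit.
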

\begin{proof}
  The $k$-th term of the sum decomposition Eq.~\eqref{eq:decomp} consists of a non-negative scalar $p_k$ which multiplies a rank-one matrix $X^k=x_k\transpose{x}_k$, where $x_k\in\{0,1\}^n$, for all $k\le P_n$. For any $y\in\mathbb{R}^n$, we have $\transpose{y}X^ky=\transpose{y}x_k\transpose{x}_ky=\transpose{(\transpose{x}_ky)}(\transpose{x}_ky)=\|\transpose{x}_ky\|_2^2\ge 0$. Hence, $X^k$ is a PSD matrix for all $k\le P_n$. The non-negative sum of PSD matrices is PSD, whence the result. \ifspringer\qed\fi
\end{proof}
A \textit{doubly non-negative} (DNN) matrix is a PSD matrix with non-negative entries. 
\begin{corollary}
  If $\Gamma\in\CONX{n}$, then it is DNN.
  \label{cor:dnn}
\end{corollary}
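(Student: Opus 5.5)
The corollary combines Lemma~\ref{lem:necexact2} with a direct look at the entries of $\Gamma$. A DNN matrix has to satisfy two conditions: it must be PSD, and it must be entrywise non-negative. The first condition is exactly the conclusion of Lemma~\ref{lem:necexact2}, so I will invoke that lemma for it. Only non-negativity of the entries is left to prove.

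For that, I start from a decomposition as in Eq.~\eqref{eq:decomp}, $\Gamma=\sum_{0\le k\le P_n} p_k X^k$ with $p\ge 0$. Such a decomposition exists because $\Gamma\in\CONX{n}$. I then write one entry explicitly:
\[
\Gamma_{ij}=\sum_{0\le k\le P_n} p_k (x_k)_i (x_k)_j .
\]
Every $x_k$ lies in $\{0,1\}^n$, so each product $(x_k)_i(x_k)_j$ is in $\{0,1\}$ and in particular is non-negative. Each weight $p_k$ is also non-negative. Hence every summand is non-negative, and so is the sum. This holds for all $i,j$, so $\Gamma$ is entrywise non-negative.

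Symmetry of $\Gamma$ is already built into the PSD conclusion, and it also follows directly from the symmetry of each $X^k$. Together these give that $\Gamma$ is DNN. I do not expect any step to be a real obstacle. The only point to watch is that the definition of DNN the paper uses, namely PSD with non-negative entries, is matched exactly by these two facts.
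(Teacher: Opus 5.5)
Your proof is correct and follows the paper's own proof: it too invokes Lemma~\ref{lem:necexact2} for positive semidefiniteness and notes that every term $p_kX^k$ of Eq.~\eqref{eq:decomp} has non-negative entries. You just write out the entrywise computation that the paper leaves implicit.
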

\begin{proof}
  By Lemma \ref{lem:necexact2} and the fact that every term of the RHS of Eq.~\eqref{eq:decomp} has non-negative entries.
\end{proof}

A \textit{completely positive} (CP) matrix $\Gamma$ is such that $\Gamma=A\transpose{A}$ where $A$ is $n\times r$ and $A$ is non-negative, i.e.
\begin{equation}
  \Gamma = \sum\limits_{k=1}^r A_{k}\transpose{A}_{k}.
  \label{eq:cp}
\end{equation}
The minimum $r$ for which $\Gamma$ can be represented as in Eq.~\eqref{eq:cp} is the {\it CP rank} of $\Gamma$. Any CP matrix is trivially DNN, but the converse does not hold in general \cite{berman}.
\begin{proposition}
If $\Gamma\in\CONX{n}$, then it is CP. \label{prop:cp}
\end{proposition}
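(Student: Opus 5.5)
The plan is to build the factor $A$ of Eq.~\eqref{eq:cp} directly from a decomposition of $\Gamma$ as in Eq.~\eqref{eq:decomp}. Since $\Gamma\in\CONX{n}$, there is a vector $p\in\mathbb{R}^{2^n}_+$ with $\Gamma=\sum_{0\le k\le P_n} p_k x_k\transpose{x}_k$. Because $p_k\ge 0$, the square root $\sqrt{p_k}$ is a well-defined non-negative real, and each term can be rewritten as
\[
p_k x_k\transpose{x}_k = (\sqrt{p_k}\,x_k)\transpose{(\sqrt{p_k}\,x_k)}.
\]

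Let $K=\{k\;|\;p_k>0\}$ and set $r=|K|$. I will take the columns of the $n\times r$ matrix $A$ to be the vectors $A_k=\sqrt{p_k}\,x_k$ for $k\in K$. Each $x_k$ lies in $\{0,1\}^n$ and each $\sqrt{p_k}\ge 0$, so every entry of $A$ is non-negative. Then $A\transpose{A}=\sum_{k\in K}A_k\transpose{A}_k=\Gamma$, because the terms with $p_k=0$ contribute nothing to the sum.

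There is one degenerate case. If $K=\emptyset$, or more generally if $\Gamma=0$, I take $A$ to be a single zero column, so the representation still exists with $r=1$.

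I expect no real obstacle here; the only points to check are non-negativity of the square roots and the handling of the zero terms. As a side remark, the same construction shows that the CP rank of $\Gamma$ is at most its rank $\|p\|_0$ in the sense of Eq.~\eqref{eq:rank}.
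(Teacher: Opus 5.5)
Your proposal is correct and follows essentially the same route as the paper: it writes each term $p_k x_k\transpose{x}_k$ as $(\sqrt{p_k}x_k)\transpose{(\sqrt{p_k}x_k)}$ and takes the non-negative columns $A_k=\sqrt{p_k}x_k$. Your extra care in discarding zero-weight terms and handling $\Gamma=0$ is a harmless refinement. Your side remark that the CP rank is at most $\|p\|_0$ matches the paper's own comment after the proof.
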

\begin{proof}
  By definition, we have $\Gamma=\sum_{k\le r} p_K x_k\transpose{x}_k$ with $p\ge 0$ for some $r\le P_n$. Since $p\ge 0$, $\sqrt{p_k}$ is real for all $k$. Hence,
  \[\Gamma=\sum\limits_{k\le r}(\sqrt{p_K}x_k)\transpose{(\sqrt{p_k}x_k)}=\sum\limits_{k=1}^r A_{k}\transpose{A}_{k}, \]
  where $A_k=\sqrt{p_k}x_k$, as claimed.  \ifspringer\qed\fi
\end{proof}
Note that the $r$ in the proof of Prop.~\ref{prop:cp} does not correspond to the CP rank: while we show that membership of $\Gamma$ in $\CONX{n}$ also proves that $\Gamma$ is CP, there may exist other non-negative factorizations of $\Gamma$ in terms of an $n\times r'$ non-negative matrix $B$ with $r'<r$. The proof of Prop.~\ref{prop:cp} only shows that the CP rank cannot exceed $r$, but this is not a tight bound \cite[Thm.~3.5]{berman}.

Proving that a matrix is DNN can be done in polynomial time up to a given precision tolerance e.g.~by computing eigenvalues and inspecting the matrix entries. By contrast, proving that a matrix is CP is \textbf{NP}-hard by reduction from \textsc{max clique} via the Motzkin-Straus formulation \cite{motzkinstraus}, which can be transformed into a linear optimization problem over the CP matrix cone \cite{murty}. 

\section{New \textbf{NP}-hardness results}
\label{s:hardness}
In this section, we prove the \textbf{NP}-hardness of membership, rank, and relaxed rank in $\CONX{n}$, as well as of rank in $\COR{n}$, using Karp reductions. We also consider many corollaries of these four main theorems, and derive \textbf{NP}-hardness results for problems related to normalized polytopes and cut cones.

\subsection{Membership in $\CONX{n}$}
\label{s:feas}
We prove \textbf{NP}-hardness of the $\CONX{n}$ membership problem by reduction from the problem of establishing membership in the correlation polytope $\mathsf{COR}(n)$, known to be \textbf{NP}-hard by \cite{pitowsky} by reduction from \textsc{1-in-3sat}: given a \textsc{sat} formula $\psi$ in conjunctive normal form, each of whose clause consists of three literals, does there exist a satisfying solution for $\psi$ with the additional constraint that exactly one literal for each clause evaluates to TRUE?

Note that membership in $\COR{n}$ simply imposes a convex hull constraint $\sum_k p_k=1$ to the cone $\CONX{n}$. So, in our reduction, we model the satisfaction of this constraint as part of the cone. For this, we need one more dimension.

\begin{theorem}
  Membership in $\CONX{n}$ is \textbf{NP}-hard, by inclusion of the case where $\Gamma$ has its last row and column identical to the diagonal, and its last element equal to $1$.
  \label{thm:feas_nphard}
\end{theorem}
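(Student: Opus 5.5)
Reduce membership in $\COR{n}$, which is \textbf{NP}-hard by \cite{pitowsky}, to membership in $\CONX{n+1}$. Add one extra coordinate, indexed $n+1$, whose generators are always switched on, so that the total weight $\sum_k p_k$ can be read off the new corner entry. Given an instance $\Gamma\in\mathbb{R}^{n\times n}$ of $\COR{n}$ membership, build the $(n+1)\times(n+1)$ matrix
\[
\Gamma' = \left(\begin{array}{cc} \Gamma & \diag{\Gamma} \\ \transpose{\diag{\Gamma}} & 1 \end{array}\right),
\]
where $\diag{\Gamma}$ is the column vector of diagonal entries of $\Gamma$. This has exactly the shape in the statement: its last row and column repeat the diagonal, and its last element is $1$. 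The construction is clearly polynomial. The claim to prove is that $\Gamma\in\COR{n}$ if and only if $\Gamma'\in\CONX{n+1}$.

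For the forward direction, take a decomposition $\Gamma=\sum_k p_k x_k\transpose{x}_k$ with $p\ge 0$ and $\sum_k p_k=1$. Set $x'_k=(x_k,1)\in\{0,1\}^{n+1}$ and keep the same weights. The top-left block of $\sum_k p_k x'_k\transpose{x'_k}$ is $\Gamma$. Its $(i,n+1)$ entry is $\sum_k p_k (x_k)_i$, which equals $\sum_k p_k (x_k)_i^2=\Gamma_{ii}$ because the entries are boolean. Its corner entry is $\sum_k p_k=1$. Hence $\Gamma'\in\CONX{n+1}$.

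For the converse, take any decomposition $\Gamma'=\sum_{k} q_k x'_k\transpose{x'_k}$ with $q\ge 0$ over all $x'_k\in\{0,1\}^{n+1}$, and split the generators by the value of their last coordinate. Write $S_1$ for those with last coordinate $1$ and $S_0$ for those with last coordinate $0$, and let $x'_k=(x_k,b_k)$. The corner entry gives $\sum_{k\in S_1} q_k=1$. The $(i,n+1)$ entries give $\sum_{k\in S_1} q_k (x_k)_i=\Gamma_{ii}$. The $(i,i)$ entries give $\sum_{k} q_k (x_k)_i=\Gamma_{ii}$. Subtracting, $\sum_{k\in S_0} q_k (x_k)_i=0$ for every $i$. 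Since all terms are non-negative, every $k\in S_0$ with $q_k>0$ must have $x_k=0$, so those generators contribute only the zero matrix. Therefore $\Gamma=\sum_{k\in S_1} q_k x_k\transpose{x}_k$ with weights summing to $1$, which shows $\Gamma\in\COR{n}$.

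The only delicate step is this converse argument. One must exclude cone decompositions that use generators with last coordinate $0$ to add mass without being counted in the corner entry. The diagonal-equals-last-column structure rules this out, via the identity $x_i^2=x_i$ and non-negativity. Everything else is bookkeeping. Since this is a single-query Karp reduction, \textbf{NP}-hardness holds already on the restricted class of $\Gamma'$ described in the statement.
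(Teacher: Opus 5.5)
Your proposal is correct and follows the paper's proof essentially step for step. You use the same bordered lifting (the block $Z$, then $\diag{Z}$ in the last row and column, then $1$ in the corner), the same lift $x_k\mapsto(x_k,1)$ for the forward direction, and, for the converse, the same subtraction of the last column from the diagonal, which uses $x_i^2=x_i$ and non-negativity to show that positively weighted generators with last coordinate $0$ are zero.
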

\begin{proof}
  All along the proof, we will employ symbols associated to ``z'' for the origin problem $\COR{n}$, and symbols associated to ``x'' for the target problem $\CONX{n}$.
  
For an instance $Z$ of $\mathsf{COR}(n)$ we will define a corresponding instance of $\CONX{n+1}$. More precisely, we let $\Gamma$ have its upper-left $n\times n$ block equal to $Z$, then $\Gamma_{i,n+1}=\Gamma_{n+1,i}=Z_{ii}$ for all $i\le n$, and finally $\Gamma_{n+1,n+1}=1$. This yields an affine map $\mathscr{L}:Z\mapsto\Gamma$ (also see Sect.~\ref{s:corrank}). Note that this definition makes $\Gamma$ have identical last row, last column, and diagonal.

We assume that $Z\in\mathsf{COR}(n)$: then, there exists a vector $\lambda\ge 0$ in $\mathbb{R}^{P_n}$ such that $Z=\sum_{k\le P_n} \lambda_kz_k\transpose{z}_k$ holds. For each vector $z_k\in\{0,1\}^n$, we define $x_k\in\{0,1\}^{n+1}$ as $x_k=(z_k,1)$ and $p_k=\lambda_k$ for all $k\le P_n$. Now we have
\begin{itemize}
\item for $1\le i\le j\le n$:
  \[\Gamma_{ij}=Z_{ij}=\sum_{k\le P_n} \lambda_k Z^k_{ij}=\sum_{k\le P_n} p_k Z^k_{ij}=\sum_{k\le P_n} p_k X^k_{ij}, \;\]
  where $X^k=x_k\transpose{x}_k$;
\item for $1\le i\le n$ (last column):
  \[\Gamma_{i,n+1}=Z_{ii}=\sum_{k\le P_n}\lambda_k Z^k_{ii}=\sum_{k\le P_n}\lambda_k z_{ki}^2,\]
  and note that $z_{ki}^2=z_{ki}$ (because $z\in\{0,1\}$), that $z_{ki}\times 1=x_{ki}x_{k,n+1}$ (by definition of $x_k$), and that $x_{ki}x_{k,n+1}=X^k_{i,n+1}$ (by definition of $X^k$), so $\Gamma_{i,n+1}=\sum_{k\le P_n} p_k X^k_{i,n+1}$ (by definition of $p_k$);
\item the last row of $\Gamma$ is identical to its last column by symmetry;
\item lastly, we check that this construction is consistent for $i=j=n+1$, i.e.
  \[1=\Gamma_{n+1,n+1}=\sum_{k\le P_{n+1}} p_kX^k_{n+1,n+1}=\sum_{k\le P_n} (p_k\times 1)=\sum_{k\le P_n} p_k=\sum_{k\le P_n} \lambda_k=1\]
  by definition of $\Gamma_{n+1,n+1}$, because $X^k_{n+1,n+1}=x_{k,n+1}^2=1$ whenever $x_k=(z_k,1)$ and $0$ otherwise, by definition of $p_k=\lambda_k$ for all $k\le P_n$, and because $Z$ is a YES instance of $\mathsf{COR}(n)$. 
\end{itemize} 
Therefore, $p\ge 0$ is such that $\Gamma=\sum_{k\le P_{n+1}} p_k X^k$, as claimed.

Next, we assume that $\Gamma$ (constructed from $Z$ as above) is an $(n+1)\times (n+1)$ YES instance of $\CONX{n}$. Thus, $\Gamma=\sum_{k\le P_{n+1}} p_k X^k$, with $X^k=x_k\transpose{x}_k$. For each $k$, let $x_k=(z_k,b)$ be a decomposition of $x_k$ into its first $n$ elements vector $z_k$, followed by the last element $b$. We know, by definition of $\Gamma$, that it has identical last row, last column, and diagonal. Hence, subtracting the last column from the diagonal, we have
\begin{equation}
  0=\sum_{k\le P_{n+1}} p_k(X^k_{ii} - X^k_{i,n+1})=\sum_{k\le P_{n+1}} p_k x_{ki}(1-x_{k,n+1})
  \label{eq:zeroeq}
\end{equation}
because $x_{ki}^2=x_{ki}\in\{0,1\}$. Since $p\ge 0$, by Eq.~\eqref{eq:zeroeq} we have $x_{ki}(1-x_{k,n+1})=0$ for all $k,i$, so if $x_{k,n+1}=b=0$, then $x_{ki}=0$ for all $i\le n$, whence $b=1$ for all non-zero boolean vectors $x_k$. This implies 
\begin{equation}
  1=\Gamma_{n+1,n+1}=\sum_{k\le P_{n+1}} p_k X^k_{n+1,n+1}=\sum_{k\le P_{n+1}} p_k x_{k,n+1}^2=\sum_{k\le P_n} p_k, 
  \label{eq:sumpk1}
\end{equation}
where the last sum consists of only $P_n$ terms because the vectors $x_k$ having zero $(n+1)$-st components are identically the zero vector, as argued above. Finally, the $n\times n$ top left block of $\Gamma$ being equal to $Z$ by construction, we have $Z=\sum_{k\le P_n} p_k X^k$ with $\sum_{k\le P_n} p_k=1$ by Eq.~\eqref{eq:sumpk1}. Therefore, $Y$ is a YES instance of $\mathsf{COR}(n)$, as claimed.
\end{proof}
The proof of Thm.~\ref{thm:feas_nphard} is such that, potentially, both source and target problems may have exponentially-sized YES certificates: this does not invalidate the reduction presented in the theorem, but it might be puzzling to some readers. By Carath\'eodory's theorem, for any long representation there must also exist a compact one. Moreover, both source and target problems are essentially LP feasibility problems: membership in $\mathsf{COR}(n)$ can be formulated as an LP feasibility with $N+1$ equality constraints, and membership in $\CONX{n}$ has $N$ equality constraints. Therefore, both problems admit basic feasible solutions with at most $N+1$ nonzeroes. These facts also prove that membership of $\CONX{n}$ is in \textbf{NP}. 

\subsection{Rank in $\CONX{n}$}
\label{s:rank}
In this section, we prove the \textbf{NP}-hardness of the rank problem associated to $\CONX{n}$.
\begin{quote}
  Given a positive integer $q$ and a symmetric $n\times n$ matrix $\Gamma$, find a vector $p\in\mathbb{R}^{P_n}_+$ such that
  \begin{eqnarray}
    \|p\|_0 &\le& q \label{eq:rk1} \\
    \Gamma &=& \sum_{k\le P_n} p_k x_k\transpose{x}_k. \label{eq:rk2}
\end{eqnarray}
\end{quote}

We reduce from \textsc{linear exact cover by 3-sets} (\textsc{linear}-X3C): given a set $U$ with $|U|=3q$ and a set $\mathcal{S}=\{S_1,\ldots,S_m\}$ of $m$ subsets of $U$ each having exactly three elements, and such that every unordered pair $\{e,f\}\subset U$ occurs in at most one of the subsets in $\mathcal{S}$, does $\mathcal{S}$ contain an exact cover (i.e.~a partition) of $U$ using exactly $q$ out of the $m$ subsets? If this problem seems contrived it is because its purpose is exactly to serve as reduction source for other, more interesting problems with challenging constraints. A non-peer-reviewed (but reasonably clear) proof of the \textbf{NP}-hardness of \textsc{linear}-X3C can be found in the first answer to \href{https://cstheory.stackexchange.com/questions/20386/}{question 20386} on \texttt{cstheory.stackexchange.com}. A peer-reviewed proof can be found in \cite[\S 2]{toulouse}, under the problem name PSTS-1-RES: given a partial Steiner triple system on $3q$ points (where $q=2\ell+1$ for some integer $\ell$), does it contain a parallel class, i.e.~a set of $q$ triplets such that each unordered pair of elements from $U$ occurs in at most one triplet of $\mathcal{S}$? The only limitation that $q$ should be odd can be removed by adding three new elements to $U$ and adding a triplet to $\mathcal{S}$ that contains exactly those new elements.

\begin{theorem}
  The rank problem for $\CONX{n}$ is \textbf{NP}-hard.
  \label{thm:rkhard}
\end{theorem}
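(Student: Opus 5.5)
The plan is a Karp reduction from \textsc{linear}-X3C. Given $(U,\mathcal{S})$ with $|U|=3q$, I will build in polynomial time a symmetric nonnegative rational matrix $\Gamma$ and a threshold $\rho$ such that two things hold. First, $\Gamma\in\CONX{n}$ for every instance, so that the promise in the rank problem is always met. Second, some decomposition \eqref{eq:rk2} has $\|p\|_0\le\rho$ if and only if $\mathcal{S}$ contains an exact cover. I will borrow the device from the proof of Thm.~\ref{thm:feas_nphard}: append a homogenizing coordinate, here indexed $0$, whose row equals the diagonal and whose own entry $\Gamma_{00}$ is prescribed.

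That device forces three properties, by the computation giving \eqref{eq:zeroeq}. Every nonzero $x_k$ with $p_k>0$ contains coordinate $0$. The total weight satisfies $\sum_k p_k=\Gamma_{00}$. The diagonal entry $\Gamma_{ee}$ equals the total weight of the vectors containing $e$.

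To tie supports to triples, I will add one further coordinate $s_j$ for each $S_j\in\mathcal{S}$. I will set $\Gamma_{e s_j}=0$ whenever $e\notin S_j$, and $\Gamma_{s_js_{j'}}=0$ for $j\neq j'$. The zero pattern of $\Gamma$ restricts which supports can appear: if $\Gamma_{ab}=0$, no vector with positive weight contains both $a$ and $b$. So any vector that uses some $s_j$ is supported inside $\{0\}\cup S_j\cup\{s_j\}$. The linearity hypothesis means distinct triples share at most one element. I will use this to show that the off-diagonal entries $\Gamma_{ef}$ for $\{e,f\}\subset S_j$ can be produced only by vectors supported in $\{0\}\cup S_j\cup\{s_j\}$. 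Without linearity, a vector could combine pairs from two triples.

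The entries will be chosen so that an exact cover $\mathcal{C}$ yields a cheap decomposition. Each $S_j\in\mathcal{C}$ gets a vector on $\{0\}\cup S_j\cup\{s_j\}$. Each $S_j\notin\mathcal{C}$ gets a fixed number of cheap filler vectors, for instance a small fixed number on $\{0,s_j\}$ and on subsets of $S_j$. The threshold $\rho$ is then $q$ plus the filler count summed over the $m-q$ non-cover triples. Conversely, from any decomposition with $\|p\|_0\le\rho$, I want a counting argument on the diagonal and off-diagonal entries of each gadget $\{0\}\cup S_j\cup\{s_j\}$. It should show that each element $e\in U$ is covered by exactly one ``full'' vector, one that contains all of $S_j$, and that the full vectors select $q$ disjoint triples. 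Membership of $\Gamma$ in $\CONX{n}$ is automatic, because the construction is simply a sum of terms $x\transpose{x}$ with positive rational weights.

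I expect the main obstacle to be the choice of numerical values for the diagonal entries, for $\Gamma_{0\,\cdot}$, and for the intra-triple entries. $\Gamma$ must not depend on the unknown cover, so cover triples and non-cover triples must reproduce the same gadget entries, using one full vector in the first case and several partial vectors in the second. The values must make the full-vector option strictly cheaper in number of terms, and they must stop a decomposition from splitting an element's weight across two triples without paying extra terms. My first attempt will make each gadget reproducible in exactly two ways. One uses a single full vector of weight $1$ plus a fixed correction. The other uses a fixed larger number of smaller vectors. With element diagonals set to $1$, the constraint at each $e$ then forces exactly one full vector through $e$. If this rigidity fails, I will fall back on adding more private coordinates per triple to tighten the zero pattern until the local count is exact.
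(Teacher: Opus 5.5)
There is a genuine gap: you never specify $\Gamma$ or $\rho$, and the converse counting argument is only hoped for. Both are deferred to a later ``choice of numerical values'', but that choice is the whole content of the reduction.

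Moreover, the one concrete configuration you commit to is inconsistent. Your homogenizing row equals the diagonal, so every positive-weight vector containing an element $e$ also contains $0$. Take the decomposition built from a cover, with $\Gamma_{ee}=1$. The weight-$1$ full vector on $\{0\}\cup S_j\cup\{s_j\}$ of the unique cover triple through $e$ already exhausts $\Gamma_{ee}$. Since all terms are nonnegative, no other positive-weight vector contains $e$. Consequently every non-cover gadget contributes $0$ to $\Gamma_{es_j}$ and to $\Gamma_{ef}$ for $\{e,f\}\subseteq S_j$, while every cover gadget contributes at least $1$ to both. So the two modes cannot reproduce the same private entries, and $\Gamma$ would depend on the cover as soon as $m>q$. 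A natural way for the modes to differ is in whether the vector carrying $e$ passes through $0$, and your device $\Gamma_{0e}=\Gamma_{ee}$ forbids exactly that.

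For the same reason, ``membership is automatic'' is unjustified: the explicit sum you describe is built from a cover, which NO instances lack. If the diagonal constraints alone forced one full vector through each $e$, a NO instance would not lie in the cone at all. For the promise, the cover would have to be forced only by the bound $\rho$.

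Your claim that linearity confines the vectors producing $\Gamma_{ef}$ to the gadget of $S_j$ is also false. A linear family may contain a triangle, i.e.\ three triples meeting pairwise in distinct points $a,b,c$. Then $\Gamma_{ab},\Gamma_{bc},\Gamma_{ac}>0$, and the zero pattern does not exclude a vector on $\{0,a,b,c\}$. Confinement has to come from counting: the private entries must force enough vectors through the $s_j$ to use up $\rho$.

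The paper uses the same homogenizing device but no private coordinates. It sets $\Gamma_{in}=\Gamma_{ii}=1$ and $\Gamma_{nn}=q$, uses threshold $q$, and counts $3q=\sum_k p_k|R_k|\le 3\sum_k p_k=3q$. However, it asserts the same $R_k\subseteq S_h$ from linearity. It also puts $\Gamma_{ij}=1$ on the pairs of \emph{every} triple, which the $q$ cover vectors do not produce for non-cover triples. So your cover-dependence worry applies there too, and you cannot simply fall back on it.

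Your private coordinates do work once the device is dropped. Take
\begin{itemize}
\item $\Gamma_{00}=m$ and $\Gamma_{0e}=1$;
\item $\Gamma_{ee}=d_e$, the number of triples containing $e$;
\item $\Gamma_{ef}=1$ for $\{e,f\}\subseteq S_j$;
\item $\Gamma_{s_js_j}=2$, $\Gamma_{0s_j}=1$, and $\Gamma_{es_j}=1$ for $e\in S_j$;
\item all other entries $0$, and $\rho=2m$.
\end{itemize}
A cover triple uses $\{0\}\cup S_j\cup\{s_j\}$ and $\{s_j\}$. Every other triple uses $\{0,s_j\}$ and $S_j\cup\{s_j\}$. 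All weights are $1$. Both modes cost two terms: the bound buys rigidity, not savings.

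Conversely, $\Gamma_{s_js_j}>\Gamma_{0s_j}>0$ forces a vector $A_j\ni 0,s_j$ and a vector $B_j\ni s_j$ with $0\notin B_j$. Since $\Gamma_{s_js_{j'}}=0$, these $2m$ vectors are distinct, so $\rho=2m$ leaves no others. Hence every vector lies in one gadget, and $p(A_j)=p(B_j)=1$. Then $\Gamma_{es_j}=\Gamma_{ef}=1$ puts all of $S_j$ into $A_j$ or into $B_j$. Finally, $\Gamma_{0e}=1$ makes $\{S_j : S_j\subseteq A_j\}$ an exact cover.

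This settles the rank problem as the paper formally states it, without promise. Guaranteeing $\Gamma\in\CONX{n}$ on NO instances would further require a costlier cover-free decomposition of the same $\Gamma$. Neither your sketch nor the paper supplies one.
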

\begin{proof}
  Given an instance of \textsc{linear}-X3C with $|U|=3q$ for some integer $q\ge 1$, we assume without loss of generality that $U=\{1,\ldots,n-1\}$, so $n=3q+1$. We construct $\Gamma$ as follows:
  \begin{itemize}
  \item for each $i<n$: $\Gamma_{in}=\Gamma_{ni} = 1$;
  \item for each $i<j<n$:
    $\Gamma_{ij} = \Gamma_{ji} = \left\{\begin{array}{ll} 1 & \mbox{if $\exists S\in\mathcal{S}$ with $\{i,j\}\in S$,} \\
      0 & \mbox{otherwise;}
    \end{array}\right.
    $
  \item for $i\in U$: $\Gamma_{ii} = 1$;
  \item $\Gamma_{nn}=q$.
  \end{itemize}
  
  First, we assume that the \textsc{linear}-X3C instance is YES. So, there exists an exact cover $\mathcal{T}\subset\mathcal{S}$ that partitions $U$: for every $i<n$ there is a unique $S\in\mathcal{T}$ such that $i\in S$. Since $|U|=3q$ and it has a partition into triplets, the number of triplets in the partition is $q$. So we can write $\mathcal{T}=\{T_1,\ldots,T_q\}$, where each $T_k\in\mathcal{T}$ is equal to some $S_j\in\mathcal{S}$. For each $k\le q$ we define $x_k\in\{0,1\}^n$ as follows:
  \[
  x_{kn}=1 \qquad \mbox{and} \qquad \forall i<n\quad x_{ki}=\left\{\begin{array}{ll} 1 & \mbox{if $i\in T_k$,} \\ 0 & \mbox{otherwise.}\end{array}\right.
  \]
  Moreover, we let $p_k=1$ for all $k\le q$. Let $\bar{\Gamma}=\sum_{k\le q} p_k x_k\transpose{x}_k$: we claim that $\bar{\Gamma}=\Gamma$. For each $i<n$ exactly one triplet $T_k$ in $\mathcal{T}$ contains $i$, hence exactly one boolean vector $x_k$ has $x_{ki}=x_{kn}=1$, which implies $\bar{\Gamma}_{in}=1=\Gamma_{in}$. The same consideration also implies that $\bar{\Gamma}_{ii}=\Gamma_{ii}$ for all $i<n$. For $i<j<n$ we have $\bar{\Gamma}_{ij}=1$ iff $i,j$ both belong to a given triplet $T_k$ (for some $k\le q$). By definition of \textsc{linear}-X3C, $T_k$ must be the only triplet to contain both $i,j$: hence $\bar{\Gamma}_{ij}=\Gamma_{ij}$. Therefore, $\Gamma$ satisfies Eq.~\eqref{eq:rk2} with exactly $q$ nonzero coefficients, whence the rank of $\Gamma$ is $\le q$.

  Next, we assume that $(q,\Gamma)$ is a YES instance of the rank problem for $\CONX{n}$, i.e.~$\Gamma$  has rank $r\le q$, whence $\Gamma=\sum_{k\le r} p_k x_k\transpose{x}_k$ with $p>0$. For each $k\le r$ we define the support sets $R_k=\{i<n\;|\;x_{ki}=1\}$ of each $x_k$. For each $i<n$ we have
  \begin{equation}
    1=\Gamma_{ii}=\sum_{k\le r} p_k x_{ki}^2=\sum_{k\le r\atop i\in R_k} p_k.
    \label{eq:ii}
  \end{equation}
  Moreover, since $\Gamma_{ni}=\Gamma_{in}=1$ for all $i<n$, we have
  \[1=\Gamma_{in}=\sum_{k\le r} p_k x_{kn}x_{ki}=\sum_{k\le r\atop i,n\in R_k} p_k.\]
  Subtracting one equation from the other, we infer that any $x_k$ such that $x_{ki}=1$ must also have $x_{kn}=1$, otherwise the two sums could not both  be equal to $1$. Hence, for any $k\le r$, if $R_k$ contains $i<n$ it also contains $n$. Now, for any $k\le r$ consider $i<j<n\in R_k$: the $k$-th term contributes $p_k>0$ to $\Gamma_{ij}$. Since $\Gamma\in\{0,1\}^{n\times n}$ and $p\ge 0$, $\Gamma_{ij}=1$. By construction of $\Gamma$, there must be $S_h\in\mathcal{S}$ such that $i,j\in S_h$. By definition of \textsc{linear}-X3C, all pairs in $R_k$ must be in a single triplet, so $R_k\subseteq S_h$, implying $|R_k|\le 3$.

  At this point, we claim that $r=q$, all $p_k=1$, and $|R_k|=3$. We sum over the last column of $\Gamma$ and obtain $\sum_{i<n}\Gamma_{in}=3q$, since $\Gamma_{in}=1$ for all $i<n$, and $|U|=|\{i\;|\;i<n\}|=3q$. We also have
  \[\sum_{i<n}\Gamma_{in}=\sum_{i<n}\sum_{k\le r}p_k x_{ki} x_{kn}=\sum_{k\le r}p_k|R_k| x_{kn}=\sum_{k\le r}p_k|R_k|\]
  by definition of $\Gamma$, by definition of $R_k$, and because $x_{kn}=1$ whenever $R_k\not=\varnothing$. Note that $|R_k|\le 3$ implies
  \begin{equation}
    3q = \sum_{k\le r} p_k|R_k|\le 3\sum_{k\le r}p_k.
    \label{eq:3q}
  \end{equation}
  Now, by $\Gamma_{nn}=q$ we have
  \begin{equation}
    q = \Gamma_{nn}=\sum_{k\le r} p_k x_{kn}^2 =\sum_{k\le r} p_k x_{kn}=\sum_{k\le r} p_k,
    \label{eq:qq}
  \end{equation}
  again by definition of $\Gamma$, by $x_{kn}\in\{0,1\}$, and because $x_{kn}=1$ whenever $p_k>0$. Combining Eq.~\eqref{eq:3q}-\eqref{eq:qq} we obtain
  \[3q\le 3\sum_{k\le r} p_k = 3q,\]
  whence equality must hold in Eq.~\eqref{eq:3q}. Therefore $|R_k|=3$ for all $k$ with $p_k>0$. To settle the claim, we remark the following facts: (i) $|R_k|=3$ for all $k\le r$, (ii) $\sum_{k\le r} p_k=q$, (iii) $r\le q$, and (iv) $p_k>0$ for all $k\le r$. These imply that $r=q$ and $p_k=1$ for all $k\le r$, as claimed.
  
Finally, by Eq.~\eqref{eq:ii} and the fact that $p_k=1$ for all $k\le r$, for each $i<n$ we have \[1=\sum_{k\le r\atop i\in R_k}1,\] so each $i<n$ belongs to exactly one $R_k$. Therefore, $R_1,\ldots R_r$ form a partition of $U$ into $q$ disjoint triplets: then $\mathcal{T}=\{R_1,\ldots R_r\}$ is an exact cover of $U$, which implies that the \textsc{linear}-X3C is a YES instance, as claimed.
\end{proof}

\subsection{Relaxed rank in $\CONX{n}$}
\label{s:rrank}
The relaxed rank problem with respect to $\CONX{n}$ is as follows. Given a symmetric $n\times n$ matrix $\Gamma\in\CONX{n}$ ($1$-bit promise), find $p\ge 0$ such that
\begin{equation}
  \left.\begin{array}{rcl}
  \sum\limits_{0\le k\le P_n} p_k X^k &=& \Gamma \\
  \sum\limits_{0\le k\le P_n} p_k &\le & \rho.
    \end{array}\right\} \label{eq:rrconx}
\end{equation}

\begin{theorem}
  The relaxed rank problem for $\CONX{n}$ is \textbf{NP}-hard.
  \label{thm:rrank}
\end{theorem}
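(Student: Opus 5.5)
The plan is a Karp reduction from membership in $\COR{n}$, which is \textbf{NP}-hard by \cite{pitowsky}. It rests on one elementary observation. For $\Gamma\in\CONX{n}$, the relaxed rank of $\Gamma$ is at most $1$ if and only if $\Gamma\in\COR{n}$.

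\emph{Observation.} Suppose $\Gamma=\sum_k p_kX^k$ with $p\ge 0$ and $\sum_kp_k=\sigma\le 1$. Since $X^0$ is the zero matrix, we can raise $p_0$ by $1-\sigma$ without changing the sum. This gives a representation with $\sum_kp_k=1$, i.e.~$\Gamma\in\COR{n}$. Conversely, any $\COR{n}$ representation has $\|p\|_1=1$. Consequently, on instances that fulfil the promise, the question ``$\|p\|_1\le\rho$ with $\rho=1$'' is exactly membership in $\COR{n}$. The map $Z\mapsto(Z,\rho=1)$ is trivially polynomial.

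\emph{Main obstacle: the promise.} The difficulty is that relaxed rank is a promise problem. The reduction must therefore produce instances that lie in $\CONX{n}$ whether the source instance is YES or NO. Otherwise we would be reducing to membership in $\CONX{n}$ itself (Thm.~\ref{thm:feas_nphard}), which the promise excludes.

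\emph{Certifying the promise.} I would use a simple sufficient condition for cone membership. If $Z\ge 0$ is symmetric and diagonally dominant, i.e.~$Z_{ii}\ge\sum_{j\ne i}Z_{ij}$ for all $i$, then
\[Z=\sum_{i<j}Z_{ij}(e_i+e_j)\transpose{(e_i+e_j)}+\sum_i\Big(Z_{ii}-\sum_{j\ne i}Z_{ij}\Big)e_i\transpose{e_i}\in\CONX{n}.\]
More generally, it suffices to exhibit any explicit polynomial-size nonnegative combination of boolean rank-one matrices; the triplet indicators of a 3-set system are another natural source. The plan is then to inspect the instances $Z_\psi$ produced by Pitowsky's \textsc{1-in-3sat} reduction. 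One should verify, or enforce by a harmless modification, that each $Z_\psi$ has such an explicit cone certificate, say built from singletons, pairs and clause-triples with suitable weights, regardless of whether $\psi$ is satisfiable. What distinguishes YES from NO is then not cone membership but whether a representation of total weight $1$ exists.

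\emph{Fallback if Pitowsky's instances lack a certificate.} I would instead pad with diagonal mass, replacing $Z$ by $\Gamma=Z+D$ with $D$ diagonal and large enough to make $\Gamma$ diagonally dominant. The threshold would become $\rho=1+\mathrm{tr}(D)$. The delicate step, which I expect to be the real work, is the converse direction. One must show that a cheap decomposition of $Z+D$ cannot exploit $D$ to save weight, and hence that it yields a $\COR{n}$ representation of $Z$. The argument should mimic the counting in Thm.~\ref{thm:rkhard}: sum suitable rows or entries of $\Gamma$ to obtain lower bounds on $\sum_kp_k$ that are tight only for the intended structure.

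The forward direction is immediate in either version: a $\COR{n}$ representation of $Z$, plus singletons of total weight $\mathrm{tr}(D)$ in the padded version, meets the threshold.
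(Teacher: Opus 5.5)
Your Observation is exactly the paper's argument. The paper also reduces from $\COR{n}$ membership and uses the zero generator $X^0$ to raise $p_0$, turning $\sum_k p_k\le\rho$ into $\sum_k p_k=\rho$. It handles a general threshold through a scaling $\COR{n}\to\rho\COR{n}$; your choice $\rho=1$ makes that step unnecessary and is cleaner. The paper then identifies the system $\sum_k p_kX^k=\Gamma$, $\sum_k p_k\le\rho$, $p\ge 0$ with the relaxed rank problem and stops. It never checks that images of NO instances lie in $\CONX{n}$. So at the paper's level of rigour, your first paragraph is already the whole proof.

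Your worry about the promise is legitimate and goes beyond the paper, but what you propose for it is not an argument yet. The primary route, showing that every Pitowsky instance $Z_\psi$ has an explicit cone certificate, is announced but not carried out. The fallback is false as stated: a decomposition of $Z+D$ \emph{can} exploit $D$. Take $n=3$ and
\[
Z=\begin{pmatrix}1&1&1\\1&1&0\\1&0&1\end{pmatrix},\qquad D=e_1\transpose{e_1}.
\]
For $y=(1,-1,-1)$ we get $\transpose{y}Zy=-1$, so $Z\notin\CONX{3}$ by Lemma~\ref{lem:necexact2}, and hence $Z\notin\COR{3}$. Yet $Z+D$ is diagonally dominant and
\[
Z+D=(e_1+e_2)\transpose{(e_1+e_2)}+(e_1+e_3)\transpose{(e_1+e_3)},
\]
which has weight $2=1+\mathrm{tr}(D)=\rho$. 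The same happens for every diagonal $D\ge 0$ that puts $Z+D$ in $\CONX{3}$. Such a $D$ must have $D_{11}\ge 1$, since $\Gamma_{23}=0$ forces the cliques $\{1,2\}$ and $\{1,3\}$ to carry weight $1$ each. Adding singletons of weights $D_{11}-1$, $D_{22}$, $D_{33}$ again gives total weight exactly $1+\mathrm{tr}(D)$. So the padded reduction sends a NO instance to a YES instance, and no counting argument in the style of Thm.~\ref{thm:rkhard} can rescue the converse.

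The structural reason is the identity $\sum_K p_K=\mathrm{tr}(\Gamma)-\sum_K(|K|-1)p_K$. Because of it, the threshold $1+\mathrm{tr}(D)$ only asks for $\sum_K(|K|-1)p_K\ge \mathrm{tr}(Z)-1$. Once the diagonal is padded, singletons turn the diagonal equalities into inequalities, and larger cliques may overshoot $Z_{ii}$. The diagonal of $Z$, which carries the $\COR{n}$ information, is then no longer enforced. You should either drop the fallback or actually verify a cone certificate for Pitowsky's instances. In the latter case you would be proving a promise version that the paper itself does not establish.
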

\begin{proof}
We reduce from the $\COR{n}$ membership problem by exploiting its simplex constraint $\sum_k p_k=1$. For any $\rho>0$, we define the scaled polytope $\rho\COR{n}$ as
\begin{equation}
  \left.\begin{array}{rcl}
    \sum\limits_{k\ge 0} p_k X^k &=& \Gamma \\
    \sum\limits_{k\ge 0} p_k &= & \rho.
    \end{array}\right\} \label{eq:scaledcor}
\end{equation}
Since scaling by $\rho$ is linear and invertible, it is a linear isomorphism, and therefore, by \cite{gls}, it provides a Karp reduction from $\COR{n}$ membership to $\rho\COR{n}$ membership, making the latter \textbf{NP}-hard.

Next, we consider the membership problem:
\begin{equation}
  \left.\begin{array}{rcl}
    \sum\limits_{k\ge 0} p_k X^k &=& \Gamma \\
    \sum\limits_{k\ge 0} p_k &\le & \rho.
    \end{array}\right\} \label{eq:relaxedscaledcor}
\end{equation}
We prove that satisfying Eq.~\eqref{eq:relaxedscaledcor} is \textbf{NP}-hard, by reduction from $\sigma\COR{n}$ for some $0<\sigma\le\rho$. Let $\Gamma\in\sigma\COR{n}$. Then, $\Gamma$ trivially satisfies Eq.~\eqref{eq:relaxedscaledcor} because $\sum_{k\ge 0}p_k=\sigma\le\rho$. Conversely, take a $\Gamma$ that satisfies Eq.~\eqref{eq:relaxedscaledcor}, and assume (without loss of generality) that $p_0=0$. Therefore, there is some $\tau>0$ such that $\sum\limits_{k\ge 1}p_k=\tau\le\rho$. We set $p_0=\rho-\tau$. Now $\sum\limits_{k\ge 0} p_k=p_0+\sum\limits_{k\ge 1} p_k=\rho-\tau+\tau=\rho$. This shows that $\Gamma\in\rho\COR{n}$. Finally, we scale by $\sigma/\rho\le 1$ to show that $\Gamma\in\sigma\COR{n}$, which concludes the reduction. This shows that Eq.~\eqref{eq:relaxedscaledcor} is \textbf{NP}-hard. Lastly, we note that Eq.~\eqref{eq:relaxedscaledcor} is also the definition of the relaxed rank problem for $\CONX{n}$, which is therefore \textbf{NP}-hard. 
\end{proof}
A non-geometric, longer, and more informative proof by reduction from the {\sc Fractional Clique Cover} problem can be found in Appendix \ref{a:rrank}.

\subsection{Rank in $\COR{n}$}
\label{s:corrank}
In order to prove that the rank decision problem for $\COR{n}$ is \textbf{NP}-hard, we use reduction techniques from Sect.~\ref{s:feas}-\ref{s:rank}. More precisely, we reduce from the rank problem in $\CONX{n}$ and use the same lifting as in the reduction from $\COR{n}$ membership to $\CONX{n+1}$ membership. For an $n\times n$ matrix $Z$, we employ the affine map $\mathscr{L}$ defined in the proof of Thm.~\ref{thm:feas_nphard}:
\[\mathscr{L}(Z) = \left(\begin{array}{cc} Z & \diag{Z} \\ \transpose{\diag{Z}} & 1\end{array}\right).\]
  \begin{lemma}
    \label{lem:ZL}
For any $n\times n$ symmetric matrix $Z$ and any integer $\rho\ge 1$, we have
\begin{enumerate}[(i)]
\item if $Z\in\COR{n}$, then $\mathscr{L}(Z)\in\CONX{n+1}$;
\item  $Z\in\COR{n}$ and the $\COR{n}$-rank of $Z$ is $\le\rho$ $\Longleftrightarrow$ $\mathscr{L}(Z)\in\CONX{n+1}$ and the $\CONX{n+1}$-rank of $\mathscr{L}(Z)$ is $\le\rho$.
\end{enumerate}
\end{lemma}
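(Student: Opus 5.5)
Part (i) is the forward half of the proof of Thm.~\ref{thm:feas_nphard}. Given $Z=\sum_{k\le P_n}\lambda_k z_k\transpose{z}_k$ with $\lambda\ge 0$ and $\sum_k\lambda_k=1$, we lift each $z_k$ to $x_k=(z_k,1)\in\{0,1\}^{n+1}$ and keep the same weights $p_k=\lambda_k$. The entrywise checks are the ones done in that proof:
\begin{itemize}
\item the top-left block of $\sum_k p_k x_k\transpose{x}_k$ equals $Z$;
\item the last column equals $\diag{Z}$, since $z_{ki}^2=z_{ki}$;
\item the corner entry equals $\sum_k\lambda_k=1$.
\end{itemize}
Hence $\mathscr{L}(Z)\in\CONX{n+1}$. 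I would simply cite that argument.

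For (ii, $\Rightarrow$), I would take a $\COR{n}$ decomposition of $Z$ with at most $\rho$ nonzero weights and apply the same lifting. The lifting is injective on indices ($z_k\mapsto(z_k,1)$) and preserves the weights, so the number of nonzero $p_k$ is unchanged. This gives a $\CONX{n+1}$ decomposition of $\mathscr{L}(Z)$ with $\|p\|_0\le\rho$.

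For (ii, $\Leftarrow$), suppose $\mathscr{L}(Z)=\sum_k p_k x_k\transpose{x}_k$ with $p\ge 0$ and $\|p\|_0\le\rho$. We rerun the converse half of Thm.~\ref{thm:feas_nphard}:
\begin{itemize}
\item Because the diagonal and the last column of $\mathscr{L}(Z)$ agree, Eq.~\eqref{eq:zeroeq} gives $x_{ki}(1-x_{k,n+1})=0$ for every $k$ with $p_k>0$. So each such $x_k$ is either the zero vector or of the form $(z_k,1)$.
\item The corner entry then gives $\sum_{k:\,x_{k,n+1}=1}p_k=1$, as in Eq.~\eqref{eq:sumpk1}.
\item Reading the top-left block gives $Z=\sum_{k:\,x_{k,n+1}=1}p_k z_k\transpose{z}_k$, which is a convex combination. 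So $Z\in\COR{n}$.
\item The zero vector in $\{0,1\}^{n+1}$ contributes nothing to $\mathscr{L}(Z)$. Dropping its weight (or moving it onto the $z=0$ generator, which is absent here since every term has last coordinate $1$) does not increase the support. So the $\COR{n}$ decomposition uses at most $\|p\|_0\le\rho$ nonzero weights.
\end{itemize}
This establishes both directions with the same rank bound.

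The main obstacle is the bookkeeping in $\Leftarrow$. In the cone, the zero generator can carry an arbitrary weight without affecting $\Gamma$. One must check it is neither needed for the simplex constraint (the corner entry already forces the nonzero-generator weights to sum to $1$) nor counted in a way that would break the rank correspondence. One might worry that the zero vector of $\{0,1\}^n$ (that is, $x=(0,\dots,0,1)$) is a legitimate $\COR{n}$ vertex. It maps injectively like the others, so it causes no trouble. A decomposition of minimal support simply sets the weight of the all-zero $(n+1)$-vector to $0$.
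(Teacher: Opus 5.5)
Your proof is correct and follows the same strategy as the paper. For $(\Rightarrow)$ you append a coordinate equal to $1$ to each $z_k$ and keep the weights; for $(\Leftarrow)$ you read off the corner entry and the top-left block.

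The one substantive difference is in $(\Leftarrow)$. You use the fact that $\mathscr{L}(Z)$ has identical diagonal and last column (Eq.~\eqref{eq:zeroeq}) to show that every generator with $p_k>0$ is either $0$ or of the form $(z_k,1)$. The paper's proof of the lemma instead deduces $x_{k,n+1}=1$ for all $k$ with $p_k>0$ from the corner equation $1=\sum_k p_k x_{k,n+1}$ alone, and that does not follow. A term $(z,0)\transpose{(z,0)}$ with $z\neq 0$ is invisible to the corner entry; it is excluded only by the diagonal/last-column structure, as you argue. So your version supplies the step that actually makes the converse direction go through.

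One small correction: drop the parenthetical about moving the weight of the all-zero $(n+1)$-vector onto the generator $(0,\dots,0,1)$. Doing so would change the corner entry. Simply discarding that weight, as you also say, is the right move.
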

\begin{proof}
  ($\Rightarrow$) Assume $Z=\sum_{k=0}^r\lambda_k z_k\transpose{z}_k$, $\lambda\ge 0$, $\sum_{k=0}^r\lambda_k=1$, $r\le\rho$. Let $x_k=(z_k,1)\in\{0,1\}^{n+1}$. Then,
  \begin{eqnarray*}
    x_k\transpose{x}_k &=& \left(\begin{array}{cc} z_k\transpose{z}_k & z_k \\ \transpose{z}_k & 1 \end{array}\right) \\
    \Rightarrow \sum_{k=0}^r \lambda_k x_k\transpose{x}_k &=& \left(\begin{array}{cc} \sum_k\lambda_k z_k\transpose{z}_k & \sum_k \lambda_k z_k \\
      \sum_k\lambda_k \transpose{z}_k & \sum_k\lambda_k\end{array}\right) =
      \left(\begin{array}{cc} Z & \diag{Z} \\ \transpose{\diag{Z}} & 1\end{array}\right)=\mathscr{L}(Z)=\Gamma
  \end{eqnarray*}
  So $\Gamma\in\CONX{n+1}$ and uses $r\le\rho$ generators.

  ($\Leftarrow$) Assume $\Gamma=\sum_{k=0}^r p_k x_k\transpose{x}_k$, $p\ge 0$, $x_k\in\{0,1\}^{n+1}$ for all $k\le r$, and $r\le\rho$. The $(n+1,n+1)$-th entry of $\Gamma$ is $1$, and in particular $1=\Gamma_{n+1,n+1}=\sum_{k=0}^r p_k x_{k,n+1}^2 = \sum_{k=0}^r p_k x_{k,n+1}$ because booleans are square-invariant. Since $x_{k,n+1}\in\{0,1\}$ and $p\ge 0$, we have $x_{k,n+1}=1$ for every $k$ with $p_k>0$, and $\sum_{k=0}^r p_k=1$. Again, since $x_k=(z_k,1)$, the upper-left block of $\mathscr{L}(Z)$ is $Z=\sum_{k=0}^r p_k x_k\transpose{x}_k$ with $\sum_{k=0}^r p_k=1$, whence $Z\in\COR{n}$ and the rank of $Z$ is $r\le\rho$, as claimed. 
\end{proof}    

\begin{theorem}
  The rank problem for $\COR{n}$ is \textbf{NP}-hard.\label{thm:corrank}
\end{theorem}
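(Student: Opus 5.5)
The plan is to reduce from the rank problem for $\CONX{n}$, which is \textbf{NP}-hard by Thm.~\ref{thm:rkhard}, and then apply Lemma~\ref{lem:ZL}(ii). Lemma~\ref{lem:ZL} only transfers rank statements between $Z$ and $\mathscr{L}(Z)$. So the key point is that the hard instances built in the proof of Thm.~\ref{thm:rkhard} already lie, up to a positive scaling, in the image of $\mathscr{L}$. Concretely, I would start from a \textsc{linear}-X3C instance with $|U|=3q$. I would build the $n\times n$ matrix $\Gamma$ of Thm.~\ref{thm:rkhard}, with $n=3q+1$: unit diagonal on $U$, unit last row and column, $\Gamma_{nn}=q$, and off-diagonal entries given by the triplets.

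Next, set $\Gamma'=\Gamma/q$. Its last diagonal entry is $1$, and for each $i<n$ we have $\Gamma'_{in}=1/q=\Gamma'_{ii}$. So the last row and column of $\Gamma'$ coincide with its diagonal, and therefore $\Gamma'=\mathscr{L}(Z)$, where $Z$ is the upper-left $(n-1)\times(n-1)$ block of $\Gamma/q$. Multiplication by $1/q>0$ is a bijection on $\CONX{n}$ that maps decompositions to decompositions with the same support ($p\mapsto p/q$). Hence $\Gamma\in\CONX{n}$ with rank $\le q$ if and only if $\Gamma'\in\CONX{n}$ with rank $\le q$. By Lemma~\ref{lem:ZL}(ii) with $\rho=q$, this holds if and only if $Z\in\COR{n-1}$ and the $\COR{n-1}$-rank of $Z$ is $\le q$. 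Chaining with the proof of Thm.~\ref{thm:rkhard}, the \textsc{linear}-X3C instance is YES exactly when $(Z,q)$ is a YES instance of the $\COR{n-1}$ rank problem. The map is a single query and is computable in polynomial time: $Z$ has rational entries in $\{0,1/q\}$ and size polynomial in $|U|+m$. So this is a Karp reduction.

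The step I expect to need the most care is the promise $Z\in\COR{n-1}$ built into the rank problem. On NO instances of \textsc{linear}-X3C, $Z$ may fail to lie in $\COR{n-1}$. I would handle this in one of two ways. The first is to read the rank problem as the conjunction stated in Lemma~\ref{lem:ZL}(ii), namely $Z\in\COR{n-1}$ and rank $\le\rho$, for which the argument above is exact. The second is to note that, under the promise formulation, the reduction is from the promise version of the $\CONX{n}$ rank problem. Its hardness is inherited the same way, since Thm.~\ref{thm:rkhard} was proved for the same instance family, and both the scaling and $\mathscr{L}$ preserve membership in both directions by Lemma~\ref{lem:ZL}. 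Either way, the reduction shows that the rank problem for $\COR{n}$ is \textbf{NP}-hard.
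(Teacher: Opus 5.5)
Your proposal uses the same ingredients as the paper's proof: hardness of the $\CONX{n}$-rank problem from Thm.~\ref{thm:rkhard}, transported by $\mathscr{L}$ through Lemma~\ref{lem:ZL}(ii). It also supplies a step that the paper's proof skips.

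The paper notes that $Z\mapsto\mathscr{L}(Z)$ is a polynomial-time affine map and concludes that it reduces $\CONX{n+1}$-rank to $\COR{n}$-rank. But $\mathscr{L}$ sends $\COR{n}$-instances to $\CONX{n+1}$-instances. In the stated direction it can therefore only be used, via its inverse, on matrices in the image of $\mathscr{L}$. The paper never checks that hard instances lie there, and the matrices of Thm.~\ref{thm:rkhard} do not, since their last entry is $q$. Your observation fixes this: those matrices already satisfy $\Gamma_{in}=\Gamma_{ii}$, so rescaling by $1/q$, which preserves supports, makes them exactly $\mathscr{L}(Z)$. This is what makes the reduction run in the right direction. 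The price is that you depend on the specific instance family built in the proof of Thm.~\ref{thm:rkhard}, not merely on its statement.

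Two caveats remain, both inherited from the paper rather than introduced by you.

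First, your chain is only as sound as that instance family, and the YES direction of Thm.~\ref{thm:rkhard} fails as written. A triple of $\mathcal{S}$ outside the cover sets $\Gamma_{ij}=1$ on its pairs, where the cover decomposition gives $0$. For example, take $\mathcal{S}=\{\{1,2,3\},\{4,5,6\},\{7,8,9\},\{1,4,7\}\}$ and $q=3$. The \textsc{linear}-X3C instance is YES, yet $\Gamma\notin\CONX{10}$:
\begin{itemize}
\item the equalities $\Gamma_{ii}=\Gamma_{i,10}$, $\Gamma_{10,10}=3$ and $\sum_{i\le 9}\Gamma_{i,10}=9$ force every nonzero term to have support $T\cup\{10\}$, with $T$ one of the four triangles of the support graph;
\item then $\Gamma_{12}=\Gamma_{14}=1$ puts weight $1$ on both $\{1,2,3\}$ and $\{1,4,7\}$;
\item this gives $\Gamma_{11}=2$, contradicting $\Gamma_{11}=1$.
\end{itemize}

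Second, the promise. Under your first reading (the conjunction in Lemma~\ref{lem:ZL}(ii)), the theorem is immediate from the \textbf{NP}-hardness of $\COR{n}$-membership \cite{pitowsky} by taking $\rho=N+1$, the Carath\'eodory bound. So the X3C machinery is unnecessary there. Under the promise reading, your second option does not follow. Nothing in Thm.~\ref{thm:rkhard} shows that NO instances of \textsc{linear}-X3C are mapped into $\CONX{n}$, and the paper leaves this open too.
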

\begin{proof}
  We reduce from the rank problem in $\CONX{n+1}$, which is \textbf{NP}-hard by Thm.~\ref{thm:rkhard}. By Lemma \ref{lem:ZL}, the map $\mathscr{L}:Z\mapsto \mathscr{L}(Z)$ is affine, can be carried out in polynomial time, and maps rational instances to rational instances. Therefore, by \cite{gls}, it provides a valid polynomial reduction from $\CONX{n+1}$ to $\COR{n}$, as claimed. We note that, by Lemma \ref{lem:ZL}, $Z$ has $\COR{n}$-rank at most $\rho$ iff $\mathscr{L}(Z)$ has $\CONX{n+1}$-rank at most $\rho$.
\end{proof}

\subsection{Hardness corollaries}
\label{s:corollaries}
In this section we list several corollaries of the theorems in Sect.~\ref{s:hardness}.

\subsubsection{Membership problems in normalized polytopes}
\label{s:normalized}
We first introduce the membership problems for the normalized polytopes for correlation and cut cones, which will give us a useful affine isomorphism to be used later. Note that $\COR{n}$ does not satisfy the definition of ``convex hull of rank-one boolean matrices'' syntactically, since the zero matrix has rank zero, not one. By contrast, $\nCOR{n}$ does. We can reduce membership in $\COR{n}$ to membership in $\nCOR{n+1}$ by the mapping $\mathscr{L}'$ (reflection of $\mathscr{L}$ along the skew diagonal, see the proof of Thm.~\ref{thm:feas_nphard}) that sends $\Gamma\in\COR{n}$ to the bordered matrix
\begin{equation}
\Gamma' = \left(\begin{array}{cc} 1 & \transpose{\diag{\Gamma}} \\ \diag{\Gamma} & \Gamma\end{array}\right) \in\nCOR{n+1},
\label{eq:ncor}
\end{equation}
where the presence of the $1$ in the upper-left corner ensures that the affine map never sends any $\Gamma\in\COR{n}$ to the zero matrix. Thus, $\mathscr{L}'$ is a polytime-computable and rational affine isomorphism between a $\COR{n}$ and a face of $\nCOR{n+1}$. Therefore, by \cite{pitowsky} and \cite{gls}, we obtain a Karp reduction from $\COR{n}$ to $\nCOR{n+1}$ (by inclusion of the relevant face), showing that membership in $\nCOR{n}$ is also \textbf{NP}-hard. 

Starting from $\CUT{n}$, we define a \textit{normalized cut polytope}, denoted $\nCUT{n}$, which removes the extreme point $Y^0=y_0\transpose{y}_0=(-\mathbf{1})\transpose{(-\mathbf{1})}=\mathbf{1}\transpose{\mathbf{1}}=Y^{P_n}$. Similarly to the case of $\COR{n}$, we reduce $\CUT{n}$ to $\nCUT{n+1}$ by means the affine isomorphism in Eq.~\eqref{eq:ncor}, which, again by \cite{gls}, provides a reduction from $\CUT{n}$ to $\nCUT{n+1}$, which is therefore \textbf{NP}-hard. We remark that, at this point of our paper, the \textbf{NP}-hardness of $\CUT{n}$ is based on a Cook-Turing reduction (see Sect.~\ref{s:membership}). We correct the situation in Sect.~\ref{s:karpred} below.

\subsubsection{Karp reduction for membership in $\CUT{n}$}
\label{s:karpred}
Since we promised Karp reductions, but some of the results we cited from the literature are based on Cook-Turing reductions (Sect.~\ref{s:membership}), we provide a Karp reduction for proving \textbf{NP}-hardness of membership in $\CUT{n}$. We reduce from $\COR{n}$, extending the cardinality-wise correspondence between $\COR{n}$ and $\CUT{n+1}$ given in Sect.~\ref{s:cutpolycone} to an affine isomorphism. Consider the affine transformation from $x_k\in\{0,1\}$ to $y_k\in\{-1,1\}$ vectors again: for any $0\le k\le P_n$ we have $y_k=2x_k-\mathbf{1}$. From this, we obtain
\begin{equation}
  y_k\transpose{y}_k=4x_k\transpose{x}_k-2(x_k\transpose{\mathbf{1}}+\mathbf{1}\transpose{x}_k)+\mathbf{1}\transpose{\mathbf{1}}.
  \label{eq:y(x)}
\end{equation}
And from the inverse mapping $x_k=(y_k+\mathbf{1})/2$ we have
\begin{equation}
  x_k\transpose{x}_k=(y_k\transpose{y}_k+y_k\transpose{\mathbf{1}}+\mathbf{1}\transpose{y}_k + \mathbf{1}\transpose{\mathbf{1}})/4.
  \label{eq:x(y)}
\end{equation}
Using Eq.~\eqref{eq:y(x)}-\eqref{eq:x(y)} and $X^k=x_k\transpose{x}_k$, $Y^k=y_k\transpose{y}_k$, we can define an affine isomorphism $\mathscr{C}$ from $\COR{n}$ and $\CUT{n+1}$ as follows:
\begin{itemize}
\item for $X^k\in X$ define $Y^k_{00}=1$, $Y^k_{0i}=2X^k_{ii}-1$ for $1\le i\le n$, and $Y^k_{ij}=4X^k_{ij}-2X^k_{ii}-2X^k_{jj}+1$ for $1\le i<j\le n$;
\item the inverse map is given by $X^k_{ij}=(1+Y^k_{0i}+Y^k_{0j} + Y^k_{ij})/4$ for all $1\le i\le j\le n$,
\end{itemize}
where we note that $Y$ is a symmetric matrix (which defines the remaining entries).

It is easy to show that this map is affine \cite{desimone90}: it suffices to verify that it holds for a generic matrix $\Gamma$ within $\COR{n}$ by simply applying the transformations to each term of the decomposition of $\Gamma$ in terms of the correlation polytope. Note, moreover, that this affine map can be applied in polynomial time, and maps rational instances of $\COR{n}$ to rational instances of $\COR{n+1}$. By \cite{gls}, a polynomial-time computable affine isomorphism with polynomially bounded encoding size gives a valid polynomial-time reduction between the corresponding decision problems. We therefore obtain a Karp reduction establishing the \textbf{NP}-hardness of the membership problem for $\CUT{n}$.

\subsubsection{Rank problem for the normalized correlation polytope}
The \textbf{NP}-hardness of membership in the normalized correlation polytope $\nCOR{n}$ was argued in Sect.~\ref{s:normalized}. The rank problem in $\COR{n}$ is \textbf{NP}-hard by Thm.~\ref{thm:corrank}, so we reduce from $\COR{n}$-rank to $\nCOR{n}$-rank in order to prove \textbf{NP}-hardness of the latter. To this purpose, we consider the same mapping $\mathscr{L}'$ defined in Sect.~\ref{s:normalized}, which maps every $Z\in\COR{n}$ to $\mathscr{L}'(Z)\in\nCOR{n+1}$, a bordered matrix with $Z$ in the upper left corner, $\diag{Z}$ in the borders, and the scalar $1$ in the lower-right entry. Since the $1$ entry is fixed, $\mathscr{L}'$ never maps to the zero matrix: hence $\mathscr{L}'$ maps into $\nCOR{n+1}$. Moreover, by Lemma \ref{lem:ZL} (with all sum quantifiers starting from $k=1$, which has no impact on the value of the sums since $X^0=0$), $\mathscr{L}'$ preserves the rank exactly. This provides a valid polynomial reduction.

\subsubsection{Rank problems for the cut and normalized cut polytopes}
The \textbf{NP}-hardness of the rank problems for $\CUT{n}$ and $\nCUT{n}$ follows from those of $\COR{n}$ and $\nCOR{n}$ and the affine mapping $\mathscr{C}$ given in Eq.~\eqref{eq:y(x)}-\eqref{eq:x(y)}, since affine isomorphisms such as $\mathscr{C}$ preserve rank exactly. This provides valid polynomial reductions.

\subsubsection{Membership, rank, relaxed rank for the cut cone}
The \textbf{NP}-hardness of membership, rank, and relaxed rank in the cut cone follows from those in the correlation cone $\CONX{n}$ (Sections \ref{s:feas}, \ref{s:rank}, \ref{s:rrank}) and the affine mapping $\mathscr{C}$ given in Eq.~\eqref{eq:y(x)}-\eqref{eq:x(y)}, since affine isomorphisms preserve rank. Moreover, because an isomorphism between cones must be linear (no translation terms), the mapping $\mathscr{C}$ applied to cones also preserves the relaxed rank $\|p\|_1$ that only depends on the conic hull and its generators, which are both preserved under the mapping. This provides valid polynomial reductions.

\section{Related results}
\label{s:related}
In this section, we look at easy polynomial cases of the problems that we proved to be \textbf{NP}-hard. Finally, we also show that the exponential extension complexity of the $\COR{n}$ membership problem implies the same for the $\CONX{n}$ membership problem. The fact that $\COR{n}$ has exponential extension complexity is not overly surprising given that the problem is \textbf{NP}-hard, but \cite{kaibelWeltge} discriminates between super-polynomial and exponential, and unconditionally with respect to the conjecture $\mathbf{P}\not=\mathbf{NP}$.

\subsection{Simple polynomial cases}
\label{s:polycases}
Let $G=(V,E)$ be the support graph of the $n\times n$ symmetric matrix $\Gamma$.

\subsubsection{Forests}
If $G$ is a forest (e.g.~tree, matching, star), then $\Gamma$ can be covered by boolean vector support sets having cardinality limited to $1,2$, as long as
\begin{equation}
\forall i\le n\quad s_i=\Gamma_{ii} - \sum_{j\le n\atop \{i,j\}\in E}\Gamma_{ij}\ge 0. \label{eq:poly1}
\end{equation}
If Eq.~\eqref{eq:poly1} holds, then we have
\[\Gamma=\sum_{\{i,j\}\in E} \Gamma_{ij} X^{\{i,j\}} + s_i X^{i},\]
where $X^{\{i,j\}}$ are the rank-one boolean matrices obtained by support vectors with two nonzero entries corresponding to indices $i,j$, and $X^{i}$ are those with one nonzero entry corresponding to index $i$. 

\subsubsection{Bounded treewidth}
If $G$ has bounded treewidth, we can solve the three problems (membership, rank, relaxed rank) in polytime by Dynamic Programming (DP) on the tree decomposition. Suppose that the treewidth of $G$ is equal to $t$: then every clique in $G$ has size bounded by $t+1$, and every clique is contained in some node (called a ``bag'', and representing a set of vertices of the original graph) of any tree decomposition of $G$ having width $t$. These are the premises to a general DP algorithm based on bounded treewidth graphs \cite{bodlaender2}.

Moreover, it can be shown that the exponentially long sums in the equality constraints in Eq.~\eqref{eq:decomp} whenever $r=P_n$ become polynomial when $G$ has bounded treewidth. Enumerate all cliques of $G$ by listing all non-empty subsets $C$ in each bag of a nice tree decomposition of $G$ (i.e.~a rooted tree decomposition where every node has at most two children and every node belongs to one of four types \cite{kloks}), then verify if they are cliques. Since the treewidth $t$ is a constant, the set $\K$ of these cliques has cardinality $O(2^tn)=O(n)$. Now we introduce the $O(n)$ decision variables $p_C\ge 0$ for each clique $C$, and the $O(n^2)$ equality constraints:
\begin{eqnarray}
\Gamma_{ij} &=& \sum_{C \in \K\atop i,j\in C} p_C \label{eq:clique1} \\
\Gamma_{ii} &=& \sum_{C \in \K\atop i\in C} P_C. \label{eq:clique2}
\end{eqnarray}
Eq.~\eqref{eq:clique1}-\eqref{eq:clique2} can replace Eq.~\eqref{eq:decomp} in the three exponential LP formulations for membership and relaxed rank, yielding polynomially-sized LP formulations for membership and relaxed rank. Applied to the exact rank problem, we obtain a polynomially-sized MILP formulation with $O(n)$ variables, which can be solved by the methods given in \cite{chan}. The exact rank problem can also be solved by DP \cite{bodlaender2} because $\|p\|_0$ is additive over the covering cliques from the bags of the tree decomposition.

\subsubsection{Chordality}
If $G$ is chordal, then every clique is contained in a unique maximal clique, the maximal cliques form a clique tree, and there are at most $n$ maximal cliques \cite{chordalGraph}, which can therefore be listed in polynomial time. Thus, both of the exponential LPs for membership and relaxed rank become polynomially-sized: both can then be solved in weakly polynomial time using e.g.~the interior point method for LP.

\subsubsection{Perfect graphs}
If $G$ is a perfect graph, the maximum weight clique is polynomial-time solvable. The primal LP formulation of the relaxed rank problem is:
\[\min \{\sum_{C\in\K} p_C \;|\; \forall i\le j\le n\; \Gamma_{ij}=\sum_{C\in\K\atop i,j\in C} p_C\land p\ge 0\},\]
where $\K$ is the set of all cliques in $G$. Its dual is
\[\max \{\langle Y,\Gamma\rangle \;|\;\forall C\in\K \;\sum_{i,j\in C} Y_{ij}\le 1\},\]
which has an exponential number of constraints. These can be handled in polynomial time by separation, iteratively solving the separation subproblem ``given $Y$, find a clique $C\in\K$ maximizing $\sum_{ij\in C} Y_{ij}$'' \cite{perfectGraph}. 

\subsection{Extension complexity}
Let $\mathsf{xc}(P)$ be the extension complexity of a linear programming formulation of some given problem $P$. Consider the affine slice 
\[ \Sigma = \{ \Gamma\in\CONX{n+1} \;|\;\Gamma_{n+1,n+1}=1 \} \]
of $\CONX{n+1}$, which is affinely isomorphic to $\COR{n}$ by the mapping $\mathscr{L}$ introduced in the proof of Thm.~\ref{thm:feas_nphard}. We have:
\begin{equation*}
\mathsf{xc}(\CONX{n+1})\ge\mathsf{xc}(\Sigma)=\mathsf{xc}(\COR{n})=2^{\Omega(n)},
\end{equation*}
where the last equation follows by \cite{kaibelWeltge}. Since Eq.~\eqref{eq:decomp} has extension complexity $2^n$, we conclude that
\[\exists c>0 \quad 2^{cn} \le \mathsf{xc}(\CONX{n})\le 2^n.\]

\section{Conclusion}
\label{s:concl}
We proved \textbf{NP}-hardness of three fundamental problems in polyhedral theory (membership, rank, and a relaxed rank derived from relaxing the zero-norm to the rank-norm) when applied to the correlation cone and polytope, the boolean quadric cone and polytope, and the cut cone and polytope, as well as to the normalized versions of the mentioned polytopes. All of these results stem from five reductions on the correlation cone and polytope, one of which (membership in the correlation polytope) was already known \cite{pitowsky}, while we believe that the other four are new.

\section*{Acknowledgments}
LL was partially supported by Gruppo Nazionale per l'Analisi Matematica la Probabilità e le loro Applicazioni – INDAM, project CUP: E53C24001950001 and by CNR Short Term Mobility Program.

\bibliographystyle{plain}
\bibliography{sb1d}

\ifelsevier
  \appendix
\else
  \ifspringer
    \appendix
  \else
  \appendix
    \titleformat{\section}
    {\normalfont\Large\bfseries}
    {Appendix \thesection}{1em}{}
  \fi
\fi

\section{Alternative reduction for the $\CONX{n}$ relaxed rank}
\label{a:rrank}
Let $\mathsf{K}$ be the complete undirected graph on the vertex set $V(\mathsf{K})=\{1,\ldots,n\}$, with loops. We denote non-loop edges by $\{i,j\}\in E(\mathsf{K})$ and loops by the singletons $\{i\}$. For $y\in\{0,1\}^n$ we let $Y=y\transpose{y}$ and $K(y)$ be the complete graph on the support set of the vector $y$, so that $Y$ is the adjacency matrix of $K(y)$. 

We re-cast the relaxed rank problem with respect to $\CONX{n}$ as follows. Given a scalar $\rho>0$ and a matrix $\Gamma$, find a collection $\mathcal{V}$ of vectors in $\{0,1\}^n$ and a vector $p\ge 0$ such that
\begin{eqnarray}
\sum\limits_{0\le k\le P_n} p_k &\le& \rho \label{eq:rr1} \\
\forall\{i,j\}\in E(\mathsf{K})\quad \sum\limits_{x_k\in\mathcal{V} \atop \{i,j\}\in K(x_k)} p_k &=&\Gamma_{ij}. \label{eq:rr2}
\end{eqnarray}

We prove the \textbf{NP}-hardness of the relaxed rank problem related to $\CONX{n}$ by reduction from the {\sc Fractional Clique Cover} problem, which is also known as {\sc Fractional Vertex Coloring} on the complemented graph \cite{lundyann}. 
\begin{quote}
  {\sc Fractional Clique Cover} (FCC). Given an undirected graph $G=(V,E)$ and a value $t>0$, determine whether there exists a set $\mathcal{K}$ of cliques of $G$, and a vector $w\in\mathbb{R}^{|\mathcal{K}|}_+$, such that
 \begin{eqnarray}
 \sum\limits_{C\in\mathcal{K}}w_{C} &\le& t \label{eq:fcc1} \\
  \forall v\in V\quad \sum\limits_{C\in\mathcal{K}\atop v\in C} w_C &=& 1. \label{eq:fcc}
 \end{eqnarray}
\end{quote}
The reduction maps Eq.~\eqref{eq:fcc1} to Eq.~\eqref{eq:rr1} and Eq.~\eqref{eq:fcc} to Eq.~\eqref{eq:rr2}.

\begin{theorem}
  The relaxed rank problem for $\CONX{n}$ is \textbf{NP}-hard by inclusion of the case where the off-diagonal entries of $\Gamma$ are in $\{0,\alpha\}$ for some $\alpha>0$.
  \label{thm:rrank2}
\end{theorem}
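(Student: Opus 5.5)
The plan is a direct Karp reduction from {\sc Fractional Clique Cover}, using the graph-theoretic recasting Eq.~\eqref{eq:rr1}--\eqref{eq:rr2}. Each $x_k$ with $p_k>0$ corresponds to the clique $K(x_k)$ of $\mathsf{K}$. Since every off-diagonal $\Gamma_{ij}$ with $\{i,j\}\notin E$ will be $0$, only cliques of $G$ can carry positive weight. So a decomposition of $\Gamma$ is exactly a weighting of cliques of $G$ with prescribed loop coverage $\Gamma_{ii}$ and prescribed edge coverage $\Gamma_{ij}$.

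Given $(G,t)$ with $V=\{1,\ldots,n\}$, I would build $\Gamma$ as follows:
\begin{itemize}
\item $\Gamma_{ij}=\alpha$ for $\{i,j\}\in E$ and $\Gamma_{ij}=0$ otherwise, with a small rational $\alpha>0$;
\item $\Gamma_{ii}=\beta$ with $\beta\ge\alpha(1+\deg(i))$, so that a feasible decomposition always exists (the $1$-bit promise): put weight $\alpha$ on every edge clique and absorb the diagonal slack with singletons.
\end{itemize}
The threshold $\rho$ is an affine function of $t$, to be determined below.

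The difficulty is that FCC constrains only vertex coverage (Eq.~\eqref{eq:fcc}), whereas Eq.~\eqref{eq:rr2} also fixes edge coverage. To repair this I would use padding.

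For the forward direction, take an FCC solution $w$ with edge coverages $e_{ij}=\sum_{C\ni i,j}w_C\le 1$. Scale it by $\alpha$, add the edge clique $\{i,j\}$ with weight $\alpha(1-e_{ij})\ge 0$, and add singletons with weight $s_i=\beta-\alpha-\alpha\sum_{j\sim i}(1-e_{ij})\ge 0$. This matches $\Gamma$ exactly. The total weight is
\[
\alpha\sum_C w_C+n\beta-n\alpha-\alpha\sum_{\{i,j\}\in E}(1-e_{ij}).
\]

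For the backward direction, take any decomposition $p$. Using the identity
\[
\sum_k p_k=\sum_i\Gamma_{ii}-\sum_k p_k(|K(x_k)|-1),
\]
strip off the edge and singleton cliques and rescale by $1/\alpha$ to recover a fractional clique cover. Its weight should be controlled by $\sum_k p_k$.

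The main obstacle is that the padding cost depends on $\sum_{\{i,j\}\in E}(1-e_{ij})$, which is not determined by $\sum_C w_C$. The objectives therefore do not correspond by a fixed affine map.

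The first fix I would try is to reweight the diagonal: set $\Gamma_{ii}=\beta+\alpha\deg(i)$, and change the padding so that each edge clique of weight $\alpha(1-e_{ij})$ is compensated by removing equal singleton weight at both endpoints. The aim is that edge padding replaces two singletons by one edge clique, changing the total by exactly $-\alpha(1-e_{ij})$, and that this is always weakly profitable.

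Then, in an optimal relaxed-rank solution, one can argue by an exchange step that it is never worse to use larger cliques. The minimum relaxed rank should then equal $n\beta+\alpha\,(\mathrm{FCC}(G)-n)-\alpha|E|+\alpha\sum_{\{i,j\}\in E}e_{ij}$. I would pin the last term down by restricting the source FCC instances to a class where an optimal cover can be chosen with every edge covered exactly once (e.g., via a vertex-splitting gadget that makes the optimum edge-regular). If that restriction fails, the fallback is to reduce instead from the membership-based proof of Thm.~\ref{thm:rrank}, using instances from Pitowsky's construction whose off-diagonals lie in $\{0,\alpha\}$.
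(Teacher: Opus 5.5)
There is a genuine gap. Your exchange step runs the wrong way, and the instances you build are trivially solvable.

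\textbf{Why the construction fails.} Your trace identity ignores the off-diagonal equalities. Combine it with $\sum_{i<j}\Gamma_{ij}=\sum_k p_k\binom{|K(x_k)|}{2}$ and Pascal's rule $\binom{m}{2}-\binom{m-1}{2}=m-1$. This gives, for every decomposition of $\Gamma$,
\[
\sum_k p_k=\sum_i\Gamma_{ii}-\sum_{i<j}\Gamma_{ij}+\sum_k p_k\binom{|K(x_k)|-1}{2}.
\]
So for fixed $\Gamma$, cliques of size $1$ or $2$ cost nothing extra. Every clique of size at least $3$ (and the empty one) is strictly penalized. Larger cliques are therefore never better, which is the opposite of what you need.

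In your construction $\Gamma_{ii}>\alpha\deg(i)=\sum_{j\neq i}\Gamma_{ij}$, and this still holds after reweighting to $\beta+\alpha\deg(i)$. Hence the decomposition into edge cliques of weight $\alpha$ plus singletons is feasible. By the identity it is optimal, so the minimum relaxed rank is $\sum_i\Gamma_{ii}-\alpha|E|$ for every graph $G$. Your own forward cost confirms this: for $\Gamma_{ii}=\beta$ it equals $n\beta-\alpha|E|+\alpha\sum_C w_C\binom{|C|-1}{2}$.

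Your instances are thus solvable in closed form and carry no information about $\mathrm{FCC}(G)$. Unless $\mathbf{P}=\mathbf{NP}$, no polynomially computable threshold $\rho(t)$ can make the reduction correct. Restricting to covers with $e_{ij}=1$ does not help either, since the optimum is not attained by cover-based decompositions at all.

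The fallback through Pitowsky's instances is not an argument. Nothing guarantees their off-diagonals lie in $\{0,\alpha\}$, and Thm.~\ref{thm:rrank} is a scaling argument that produces no such structure.

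\textbf{The missing idea.} The FCC budget has to be enforced inside $\Gamma$, not through $\rho$. The paper adds an apex vertex $n$, with $V=\{1,\ldots,n-1\}$, and sets:
\begin{itemize}
\item $\Gamma_{ij}=1/n^2$ on edges of $G$ and $0$ on non-edges;
\item $\Gamma_{in}=1/n^2$ for all $i\in V$;
\item $\Gamma_{ii}=1/n$ for $i\in V$;
\item $\Gamma_{nn}=t/n^2$.
\end{itemize}

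Backward direction: in any decomposition, the cliques through $n$, restricted to $V$, are cliques of $G$, because non-edges are $0$. They cover each $i$ with weight exactly $\Gamma_{in}$, and their total weight is at most $\Gamma_{nn}$. Scaling by $n^2$ gives a fractional clique cover of value at most $t$.

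Forward direction: from a cover $w$, take each $C\cup\{n\}$ with weight $w_C/n^2$ and top up $\Gamma_{nn}$ with $\{n\}$. The edge-coverage discrepancy you identified is then absorbed by $2$-cliques $\{i,j\}$ and singletons $\{i\}$ that avoid $n$. These leave row $n$ untouched and fit in the slack of $\Gamma_{ii}=1/n$. The threshold $\rho$ is simply chosen large enough to accommodate this padding.

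Since $\rho$ plays no role in the backward direction, FCC NO instances are mapped outside $\CONX{n}$. The paper thus gives up the 1-bit promise. The diagonal dominance you introduced to keep that promise is exactly what makes your instances easy.
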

\begin{proof}
For a given scalar $t>0$ and a graph $G=(V,E)$, let $(t,G)$ be an instance of the FCC. We construct an instance of the relaxed rank problem with $n=|V|+1$, so we assume that $V=\{1,\ldots,n-1\}$: for $i,j\in V, i\neq j$, let $\Gamma_{ij}=1/n^2$ for $\{i,j\}\in E$ and $\Gamma_{ij}=0$ for $\{i,j\}\notin E$. Moreover, for $i\in V$, let $\Gamma_{ii}=1/n$. Then let $\Gamma_{in}=1/n^2$ for $i\in V$ and $\Gamma_{nn}=t/n^2$. Finally, we let \leo{$\rho=\frac{2t}{n^2} + \frac{3n-1}{2n}=\frac{3n^2-n+4t}{2n^2}$}. We prove that the instance \leo{$(\rho,\Gamma)$} is YES iff $(t,G)$ is a YES instance of the FCC. Note that the size of the relaxed rank instance is polynomially bounded in the size of the FCC instance.

We assume that the solution of the YES instance $(t,G)$ of the FCC is a collection $\C$ of (say) $m$ cliques of $G$ with weights $w_C>0$ for $C\in\C$, satisfying Eq.~\eqref{eq:fcc1}-\eqref{eq:fcc}. We note that $m$ is polynomially bounded in the size of the FCC instance, since NP certificates are polytime verifiable; therefore, $m$ is also polynomially bounded in the size of the relaxed rank instance.

We construct a solution of the corresponding relaxed rank instance $(\leo{\rho},\Gamma)$ consisting of a set $\K$ of cliques of the complete graph $\mathsf{K}$, and a vector $p$ indexed by cliques in $\K$. Initially, we set $\K=\varnothing$. We first deal with the last column of $\Gamma$: let $\K=\{C\cup\{n\}\;|\;C\in\C\}$. For a clique $K=C\cup\{n\}\in\K$, we define $p_{K}=w_{C}/n^2$. Condition \eqref{eq:fcc} guarantees that
\[\forall i\le n\quad \sum\limits_{K\in\K\atop \{i,n\}\subseteq K} p_K = \sum\limits_{C\in\C\atop i\in C}\frac{w_C}{n^2} = \frac{1}{n^2} = \Gamma_{in}.\]
Moreover, the contribution of these cliques to $\Gamma_{ij}$ for $\{i,j\}\not\in E$ is zero, as desired, i.e.
\[\forall \{i,j\}\not\in E\quad \sum\limits_{K\in\K\atop\{i,j\}\subseteq K} p_K = \sum\limits_{C\in\C\atop\{i,j\}\subseteq C} \frac{w_C}{n^2} = 0 = \Gamma_{ij},\]
since all sets in $\C$ are cliques of $G$, and $\{i,j\}\not\in E$. Next, the condition $\sum_{C\in\C} w_C \leq t$ guarantees that
\[\sum\limits_{K\in\K\atop n\in K} p_K = \sum\limits_{C\in\C} \frac{w_C}{n^2} \leq \frac{t}{n^2} = \Gamma_{nn}.\]
If the inequality is strict, we extend $\K$ with the $1$-clique $\{n\}$ having weight \leo{$p_{\{n\}}=\frac{t}{n^2}-\sum_{C\in\C} \frac{w_C}{n^2}$}, yielding
\[\sum\limits_{K\in\K\atop n\in K} p_K = \Gamma_{nn}.\]
\leo{We note that the clique $\{n\}$ is not yet part of $\K$, so we add it to $\K$.}

We now consider Eq.~\eqref{eq:decomp} for $\{i,j\}\in E$ (for both $i=j$ and $i\not=j$). We want to achieve
\[\forall \{i,j\}\in E\quad \sum_{K\in\K\atop\{i,j\}\subseteq K} p_K = \Gamma_{ij}.\]
With the current definition of $\K$, we have
\[\sum\limits_{K\in\K\atop\{i,j\}\subseteq K} p_K = \sum\limits_{C\in\C\atop\{i,j\}\subseteq C} \frac{w_C}{n^2} \leq \sum\limits_{C\in\C\atop i\in C} \frac{w_C}{n^2} = \frac{1}{n^2}.\]
In order to reach the required $\Gamma_{ij} = 1/n^2$, it is sufficient to \leo{consider the $2$-clique $\{i,j\}$ with weight}
\[p_{\{i,j\}}=\frac{1}{n^2}-\sum\limits_{K\in\K\atop\{i,j\}\subseteq K} p_K.\]
\leo{We note that the $2$-clique $\{i,j\}$ is not yet in $\K$, so we add it to $\K$.} This settles all $\Gamma_{ij}$ for $\{i,j\}\in E$.

For $i=j$, note that
\[ \forall i\in V\quad \sum\limits_{K\in\K\atop i\in K} p_K = \frac{1}{n^2}\]
after the initialization of \leo{the weights $p_K$ for $K\in \K$}. In the (possible) successive $2$-clique extensions, this sum is increased by at most $1/n^2$ for each edge incident to $i$ \leo{(there are at most $n-1$ such edges)}. This means that, after the extension, we have
\leo{\[\sum\limits_{K\in\K\atop i\in K} p_K \leq \frac{1}{n^2} + \frac{n-1}{n^2}=\frac{1}{n}.\]}
Hence, the required weight $\Gamma_{ii}$ is reached by further extending $\K$ (if needed) with the $1$-clique $\{i\}$ having weight
\[p_{\{i\}}=\frac{1}{n}-\sum\limits_{K\in\K\atop i\in K} p_K.\]
None of these $1$- and $2$-clique extensions affect the correct entries already achieved for $\Gamma$.

The construction of $\K$ ensures that its size, say $r$, is polynomially bounded in $m$, which is polynomially bounded in the size of the relaxed rank instance.

Finally, we look at Eq.~\eqref{eq:rr1}. We note that $\mathcal{W}=\sum_{K\in\K} p_K$ is initially at most $t/n^2$. The possible extensions of $\K$ increase $\mathcal{W}$ \leo{by at most: $\frac{t}{n^2}$ for $p_{\{n\}}$, $1/n^2$ for each incident edge to $v$, and \leo{$1/n$} for each $v\in V$}. So the total contribution $\mathcal{W}'$ from the extra cliques is bounded above by
\leo{\[\mathcal{W}'=\frac{t}{n^2} + \frac{n(n-1)}{2n^2} + n\frac{1}{n} = \frac{t}{n^2}+\frac{3n-1}{2n}, \]
and therefore $\mathcal{W}\le \frac{t}{n^2} + \mathcal{W}' = \frac{2t}{n^2} + \frac{3n-1}{2n}=\rho$}, as claimed. 
Thus, the reduction maps a feasible instance of the FCC to a feasible relaxed rank instance.

Conversely, consider a feasible solution of the relaxed rank instance, defined by the collection of cliques $\K$ of the complete graph $\mathsf{K}$ on $n$ vertices, with each clique having weight $p_K$ for $K\in\K$. Note that, for each $K\in\K$, $K\setminus\{n\}$ is a clique of $G$: this occurs because, for all $\{i,j\}\notin E$, $\Gamma_{ij}=0$ guarantees that $\{i,j\}\not\subseteq K$. Let $K_1,\ldots,K_m$ be the cliques in $\K$ such that $n\in K$ and $K\cap V\not=\varnothing$. We construct a feasible solution of the FCC instance $(t,G)$: we let $C=\{C_1,\ldots,C_m\}$, where $C_i=K_i\setminus\{n\}$ with weight $w_{C_i}=n^2 p_{K_i}$ for $i\in\{1,\ldots,m\}$. The condition
\[\Gamma_{nn} = \frac{t}{n^2} = \sum\limits_{K\in\K\atop n\in K} p_K \geq \sum\limits_{K\in\{K_1,\ldots,K_m\}} p_K\]
implies
\[\sum\limits_{C\in\C} w_C \leq t,\]
so the solution has value at most $t$. Finally, the condition
\[\forall i\in V\quad \Gamma_{in} = 1/n^2 = \sum\limits_{K\in\K\atop\{i,n\}\in K} p_K = \sum\limits_{K\in\{K_1,\ldots,K_m\}\atop i\in K} p_K\]
implies \eqref{eq:fcc}, so the solution is feasible. \ifspringer\qed\fi
\end{proof}
We observe that the relaxed rank problem for $\CONX{n}$ is in \textbf{NP}, given that YES instances can be certified by a number of strictly positive $p_k$ that is at most $N=n(n+1)/2$.

\end{document}